\DeclarePairedDelimiter\floor{\lfloor}{\rfloor}
\newtheorem{prop}{Proposition}[section]
\newtheorem{stat}[prop]{Statement}
\newtheorem{lemma}[prop]{Lemma}
\newtheorem{theorem}[prop]{Theorem}
\theoremstyle{definition}
\newtheorem{remar}[prop]{Remark}
\renewcommand{\Im}{{\mathrm {Im}}}
\newcommand{\ord}{{\mathrm {ord}}}
\newcommand{\Z}{{\mathbb Z}}
\def\dis{\displaystyle}
\begin{document}

\title{On the vanishing of coefficients of $\eta^{26}$}

\author[]{}
\email{}
\address{}

\author[]{}
\email{}
\address{}

\author{S. Krishnamoorthy and T. Dalal  }
\address{S. Krishnamoorhty\\Indian Institute of Science Education and Research, Thiruvananthapuram, India} \email{srilakshmi@iisertvm.ac.in}
\address{Tarun Dalal\\Institute of Mathematical Sciences, ShanghaiTech University 393 Middle Huaxia Road, Pudong, Shanghai 201210, China} 
\email{tarun.dalal80@gmail.com}


\keywords{Eta function, vanishing, Fourier Coefficients, Eigenforms}
\subjclass[2020]{11F11}

\begin{abstract}
J.-P. Serre, in his paper \cite{1}, established a sufficient condition on $n$ for the $n$-th coefficient of the series $\eta^{26}$ to vanish. However, the question that whether this is a necessary condition remained unanswered. 
In this paper, using the theory of Hecke eigenforms explored by Serre, we prove some partial cases for the converse part.

\end{abstract}

\maketitle

\section{\textbf{Introduction}}

The Dedekind's eta-function is defined by
\begin{align*}
    \eta(z):=q^{1/24}\prod_{m=1}^\infty(1-q^m),
\end{align*}
where $q=e^{2\pi iz}$, $\Im(z)>0$. For an integer $r$, the $r$-th power of $\eta$ can be expressed as the infinite sum
\begin{align*}
    \eta^r(z)=q^{r/24}\sum_{n=0}^\infty p_r(n)q^n,
\end{align*}
where the coefficients $p_r(n)$ are defined by the identity
\begin{align*}
    \prod_{m=1}^\infty (1-q^m)^r=\sum_{n=0}^\infty p_r(n)q^n.
\end{align*}

There has been much interest in $p_r(n)$, in particular in its vanishing. It turns out that, for certain values of the exponent $r$, the set of $n$ such that $p_r(n)\neq0$ has density zero. In that case, the series $\eta^r$ is said to be lacunary. This is the case for $r=1,3$ as shown by the identities of Euler and Jacobi, respectively (\cite{3}, Ch: XIX):
\begin{align*}
    \prod_{m=1}^\infty(1-q^m)&=\sum_{n=-\infty}^\infty(-1)^nq^{(3n^2+n)/2}=1-q-q^2+q^5+q^7-q^{12}-q^{15}+q^{22}+q^{26}-\cdots,\\
    \prod_{m=1}^\infty(1-q^m)^3&=\sum_{n=-\infty}^\infty(-1)^n(2n+1)q^{(n^2+n)/2}=1-3q+5q^3-7q^6+9q^{10}-11q^{15}+13q^{21}-\cdots.
\end{align*}
Computer calculations make it probable that there is no other odd values of $r$ for which the series $\eta^r$ is lacunary. On the other hand, for even $r$, Ramanujan \cite{4} proved that $\eta^r$ is lacunary for $r=2,4,6,8$. Later, it was proved (\cite{5,6,7}) that this is also the case for $r=10,14\text{ and }26$. Finally, Serre \cite{1} proved that these are the only cases for even $r$ such that $\eta^r$ is lacunary.

For $r=2,4,6,8,10\text{ and }14$, Serre \cite{1} also established necessary and sufficient conditions on $n$ so that $p_r(n)=0$. Furthermore, for $r=26$, he proved the following theorem:

\begin{theorem}\cite{1}\label{Serre theorem}
If $n$ satisfies one of the following conditions, then $p_{26}(n)=0$.
\begin{itemize}
    \item There exists a prime number $p\equiv3\pmod4$ whose highest power dividing $12n+13$ is odd; and there exists a prime number $p'\equiv2\pmod3$ whose highest power dividing $12n+13$ is odd.
    \item $12n+13$ is a square; and all of its prime factors are congruent to $11$ modulo $12$.
\end{itemize}
\end{theorem}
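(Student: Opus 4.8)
The plan is to convert the fractional-exponent series into an honest integral-weight modular form and then read off the vanishing from the arithmetic of two imaginary quadratic fields. First I would clear the denominator by setting $g(z) := \eta(12z)^{26}$. With $q=e^{2\pi i z}$ one gets $g(z)=\sum_{N} a(N)q^{N}$, where $a(N)=p_{26}\!\left(\tfrac{N-13}{12}\right)$ when $N\equiv 1\pmod{12}$ and $a(N)=0$ otherwise; thus $p_{26}(n)=0$ is exactly the vanishing of the $N$-th coefficient of $g$ at $N=12n+13$. A standard eta-quotient computation (Ligozat's criterion) shows that $g$ is a cusp form of weight $13$ on $\Gamma_0(144)$ whose nebentypus is the quadratic character $\chi_{-4}$ modulo $4$ (odd, as it must be in odd weight).

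The decisive structural input, which is precisely the content of Serre's theory of lacunary $\eta$-powers, is that $g$ is a linear combination of two CM theta series
\[
g \;=\; c_{1}\,\theta_{\psi_{1}} \;+\; c_{2}\,\theta_{\psi_{2}}, \qquad \theta_{\psi_{j}}=\sum_{\mathfrak a}\psi_{j}(\mathfrak a)\,q^{\Nm\mathfrak a},
\]
where $\psi_{1}$ is a Hecke character of $\Q(i)$ and $\psi_{2}$ a Hecke character of $\Q(\sqrt{-3})$, each of infinity type $\alpha\mapsto\alpha^{12}$ (so that each $\theta_{\psi_{j}}$ has weight $13$), and $c_{1},c_{2}$ are nonzero constants pinned down by comparing the first several Fourier coefficients. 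Granting this, the coefficient in question is
\[
a(N)\;=\;c_{1}\!\!\sum_{\Nm\mathfrak a=N}\!\!\psi_{1}(\mathfrak a)\;+\;c_{2}\!\!\sum_{\Nm\mathfrak b=N}\!\!\psi_{2}(\mathfrak b),
\]
the sums running over ideals of $\Z[i]$ and $\Z[\omega]$ respectively.

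For the first bullet I would invoke the classical representation criteria: $N$ is a norm from $\Z[i]$ (a sum of two squares) iff every prime $\equiv 3\pmod 4$ divides $N$ to an even power, and $N$ is a norm from $\Z[\omega]$ (a value of $x^{2}+xy+y^{2}$) iff every prime $\equiv 2\pmod 3$ divides $N$ to an even power. The hypotheses say exactly that $N$ fails both criteria, so both index sets are empty and $a(N)=0$. For the second bullet, write $N=m^{2}$ with $m=\prod_i p_i^{e_i}$ and every $p_i\equiv 11\pmod{12}$. Such $p_i$ are inert in both fields, so the unique ideal of norm $N$ in either ring is the principal ideal $(m)$; hence each inner sum is a single term and $a(N)=c_{1}\psi_{1}((m))+c_{2}\psi_{2}((m))$. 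Evaluating on the rational integer $m$ gives $\psi_{j}((m))=m^{12}\chi_{j}(m)$ with $\chi_{j}(m)\in\{\pm 1\}$ governed by $\chi_{-4}$ and $\chi_{-3}$, both of which equal $-1$ on each $p_i\equiv 11\pmod{12}$; the normalization of $c_{1},c_{2}$ forced in the previous step then makes the two contributions cancel, so $a(N)=0$.

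The main obstacle is establishing the decomposition of the second paragraph with enough precision: identifying $\psi_{1},\psi_{2}$ together with their conductors and the constants $c_{1},c_{2}$, so as to justify both that $g$ is supported on the two families of norms and that the two terms cancel under the second condition. Justifying the CM decomposition itself rests on Serre's theory (lacunarity forces $g$ into the span of CM forms), while fixing the characters and constants is an explicit comparison of initial coefficients; once this is in place, the two cases reduce to the short arguments above. It is worth emphasizing that these arguments give only sufficiency: for $N$ that \emph{is} a norm from one of the fields, $a(N)$ can still vanish through internal cancellation within a single $\theta_{\psi_j}$, which is exactly why the converse direction studied in this paper is delicate.
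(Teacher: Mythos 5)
Your overall route is the same as Serre's, and it is exactly the machinery this paper records in Section~2 (the paper itself takes Theorem~\ref{Serre theorem} from \cite{1} and does not reprove it): realize $g(z)=\eta(12z)^{26}$ as a weight-$13$ form of level $144$ and nebentypus $\left(\frac{-1}{\bullet}\right)$, decompose it into CM theta series attached to $\Q(i)$ and $\Q(\sqrt{-3})$, and translate vanishing of coefficients into statements about norms of ideals. But your central structural claim is false: $g$ is \emph{not} a linear combination of two Hecke theta series, one per field. The correct identity is
\begin{align*}
\eta^{26}(12z)=\tfrac{1}{32617728}\left(\phi_{K_1,c_{1+}}+\phi_{K_1,c_{1-}}-\phi_{K_2,c_{2+}}-\phi_{K_2,c_{2-}}\right),
\end{align*}
a combination of \emph{four} pairwise distinct normalized Hecke eigenforms, two characters on each field differing by a quadratic twist (e.g.\ $t_{1+}(7)=-t_{1-}(7)=-102960\sqrt{-3}\neq0$ and $t_{2+}(5)=-t_{2-}(5)=20592\neq0$, so no two of the four coincide). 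Since the theta series of any Hecke character is itself a Hecke eigenform, a two-term expression $g=c_1\theta_{\psi_1}+c_2\theta_{\psi_2}$ would be a second, shorter eigenform decomposition of the same form, contradicting the linear independence of eigenforms with distinct systems of eigenvalues. So the decomposition you propose to pin down ``by comparing the first several Fourier coefficients'' does not exist.

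This gap is harmless for your first case, where the argument only uses emptiness of the index sets: an odd $\ord_p(12n+13)$ at some $p\equiv3\pmod4$ forces $t_{2\pm}(12n+13)=0$ and an odd $\ord_{p'}$ at some $p'\equiv2\pmod3$ forces $t_{1\pm}(12n+13)=0$, exactly as in the paper. It is fatal for your second case, whose cancellation mechanism is provably inconsistent. Concretely: in your model, for a prime $p\equiv7\pmod{12}$ the $\Q(i)$-series has $r_{\psi_1}(p)=0$ ($p$ inert), and $a(p)=0$ because $p\not\equiv1\pmod{12}$, which forces $r_{\psi_2}(p)=0$ as well; the Hecke recursion $r(p^2)=r(p)^2-\chi_{-4}(p)p^{12}$ then gives $r_{\psi_1}(p^2)=r_{\psi_2}(p^2)=p^{12}$, hence $a(p^2)=(c_1+c_2)p^{12}$. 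Your second case requires $c_1=-c_2$, so your model predicts $p_{26}\bigl((p^2-13)/12\bigr)=0$ for every such $p$; but $p_{26}(3)=a(49)=-1950\neq0$. The true reason $a(p)$ vanishes for $p\equiv7\pmod{12}$ is the cancellation $t_{1+}(p)=-t_{1-}(p)$ between two distinct eigenforms, each individually nonvanishing at $p$ --- precisely the phenomenon a one-character-per-field model cannot express. With the four-form identity the second case is immediate: if $12n+13=m^2$ with every prime factor of $m$ congruent to $11\pmod{12}$, all those primes are inert in both fields and occur to even exponents, so all four coefficients equal $m^{12}$ and the signs $(+,+,-,-)$ give $p_{26}(n)=0$. (A further slip in your sign bookkeeping: the central character $\omega_\psi(n)=\psi(n\mathcal O_K)/n^{12}$ of such a Hecke character is necessarily even, so it cannot be $\chi_{-4}$ or $\chi_{-3}$; the nebentypus condition forces the central characters to be $\chi_{-4}\chi_{-3}$ and the trivial character, both of which are $+1$ on primes $\equiv11\pmod{12}$.)
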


This theorem is also proved by Chan et. al. \cite{2} by establishing an explicit formula for $p_{26}(n)$. However, whether the converse is true or not remained unanswered. Let us denote the highest power of a prime number $p$ dividing an integer $n$ as $ord_p(n)$. 
Note that the following statement is the converse of Theorem~\ref{Serre theorem}.

\begin{stat}\label{thm1}
If $p_{26}(n)=0$, then $n$ must satisfy one of the following conditions:
\begin{itemize}
    \item[I.] There exists a prime number $p\equiv3\pmod4$ such that $ord_p(12n+13)$ is odd; and there exists a prime number $p'\equiv2\pmod3$ such that $ord_{p'}(12n+13)$ is odd.
    \item[II.] $12n+13$ is a square; and all of its prime factors are congruent to $11$ modulo $12$.
\end{itemize}
\end{stat}

Observe that Statement \ref{thm1} is equivalent to check the following statement:
\begin{stat} \label{negation statement}
If $n$ satisfies one of the following conditions, then $p_{26}(n)\ne 0$:
\begin{itemize}
\item[N1.] For every prime $p\equiv 3 \pmod 4$, $\mathrm{ord}_p(12n+13)$ is even and there exists a prime $p^\prime \not\equiv 11 \pmod {12}$ such that $\mathrm{ord}_{p^\prime}(12n+13)>0.$
\item[N2.] For every prime $p\equiv 2 \pmod 3$, $\mathrm{ord}_p(12n+13)$ is even and there exists a prime $p^\prime \not\equiv 11 \pmod {12}$ such that $\mathrm{ord}_{p^\prime}(12n+13)>0.$ 
\end{itemize}
\end{stat}

In this article, we first verify some partial cases for N1 and N2. Finally as applications of those results, we prove the following necessary and sufficient conditions.

\begin{theorem}
\label{MT 1}
Let $n\in \mathbb{N}$ such that $\ord_p(12n+1)\not\equiv 4 \pmod 5$ for every prime $p\equiv 1 \pmod {12}$. Then
$p_{26}(25n+1)=0$ if and only if there exists a prime number $p\equiv 3 \pmod 4$ such that $\ord_p(12n+1)$ is odd; and there exists a prime number $p^\prime \equiv 2 \pmod 3$ such that $\ord_{p^\prime}(12n+1)$ is odd.
\end{theorem}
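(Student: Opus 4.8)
The plan is to reduce everything to the elementary identity $12(25n+1)+13 = 25(12n+1) = 5^2(12n+1)$, and then feed the resulting number into Serre's Theorem~\ref{Serre theorem} for the ``if'' direction and into our partial verifications of N1 and N2 for the ``only if'' direction. Write $N := 12(25n+1)+13$; the computation $12(25n+1)+13 = 300n+25 = 25(12n+1) = 5^2(12n+1)$ is the linchpin of the whole argument. Note $12n+1\equiv 1\pmod{12}$, so it is coprime to $6$. From $N = 5^2(12n+1)$ one reads off $\ord_p(N)=\ord_p(12n+1)$ for every prime $p\neq 5$, and $\ord_5(N)=2+\ord_5(12n+1)$. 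Three consequences will be used repeatedly: (i) since the only change is a shift by the \emph{even} number $2$ at $p=5$, the parity of $\ord_p(N)$ equals that of $\ord_p(12n+1)$ for \emph{every} prime $p$; (ii) $5\mid N$ with $5\not\equiv 11\pmod{12}$, while $5\equiv 1\pmod 4$ and $5\equiv 2\pmod 3$; (iii) since $5\not\equiv 1\pmod{12}$, the primes $p\equiv 1\pmod{12}$ dividing $N$ coincide with those dividing $12n+1$ and satisfy $\ord_p(N)=\ord_p(12n+1)$, so the hypothesis of Theorem~\ref{MT 1} says exactly that $\ord_p(N)\not\equiv 4\pmod 5$ for every prime $p\equiv 1\pmod{12}$.

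For the ``if'' direction, suppose there is a prime $p\equiv 3\pmod 4$ with $\ord_p(12n+1)$ odd and a prime $p'\equiv 2\pmod 3$ with $\ord_{p'}(12n+1)$ odd. By (i) these same primes give $\ord_p(N)$ and $\ord_{p'}(N)$ odd (for the case $p'=5$ note that $\ord_5(N)=2+\ord_5(12n+1)$ is still odd). Hence $N=12(25n+1)+13$ satisfies the first bullet of Theorem~\ref{Serre theorem}, and therefore $p_{26}(25n+1)=0$. This direction uses only Serre's sufficiency result.

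For the ``only if'' direction I argue by contraposition, showing that if the right-hand condition fails then $p_{26}(25n+1)\neq 0$; this is where the partial cases of N1 and N2 enter. The negation of the right-hand condition is: either (A) every prime $p\equiv 3\pmod 4$ has $\ord_p(12n+1)$ even, or (B) every prime $p'\equiv 2\pmod 3$ has $\ord_{p'}(12n+1)$ even. In case (A), by (i) every prime $p\equiv 3\pmod 4$ has $\ord_p(N)$ even; by (ii) the prime $5\not\equiv 11\pmod{12}$ divides $N$; and by (iii) the hypothesis of Theorem~\ref{MT 1} furnishes exactly the order condition on the primes $\equiv 1\pmod{12}$ that is needed to invoke our partial case of N1 for the integer $25n+1$, whence $p_{26}(25n+1)\neq 0$. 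In case (B), by (i) together with the computation $\ord_5(N)=2+\ord_5(12n+1)$ (which preserves evenness at $p'=5$), every prime $p'\equiv 2\pmod 3$ has $\ord_{p'}(N)$ even; again $5\mid N$ with $5\not\equiv 11\pmod{12}$, and the hypothesis supplies the mod-$5$ order condition, so our partial case of N2 gives $p_{26}(25n+1)\neq 0$. Either way we contradict $p_{26}(25n+1)=0$, which proves the implication and completes the biconditional.

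Two remarks explain why this yields a clean statement and where the difficulty actually lies. First, condition~II of Statement~\ref{thm1} is automatically vacuous for $N=5^2(12n+1)$: it would require every prime factor of $N$ to be $\equiv 11\pmod{12}$, yet $5\mid N$ and $5\not\equiv 11\pmod{12}$. This is precisely why only the first bullet survives in Theorem~\ref{MT 1}. Second, the theorem is, modulo the bookkeeping above, a formal consequence of the partial N1/N2 results, so within the present proof the only real task is checking that $N$ meets their hypotheses — the order-mod-$5$ condition and the auxiliary prime factor $\not\equiv 11\pmod{12}$, both supplied by (ii) and (iii). The genuine obstacle is inherited from those results: their proofs establish non-vanishing by showing $p_{26}$ is nonzero already modulo $5$, a method that controls the split primes $p\equiv 1\pmod{12}$ only when $\ord_p\not\equiv 4\pmod 5$ (the value $\equiv 4\pmod 5$ being exactly the case in which the relevant local factor at such a prime degenerates modulo $5$). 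Removing this restriction — equivalently, ruling out the accidental cancellations not accounted for by condition~II — is the hard, still-open part of Statement~\ref{thm1}.
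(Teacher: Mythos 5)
Your proposal is correct and takes essentially the same route as the paper: the linchpin identity $12(25n+1)+13=5^2(12n+1)$ with its parity consequences, Serre's Theorem~\ref{Serre theorem} for the ``if'' direction, and contraposition to your cases (A)/(B) settled by the paper's partial N1/N2 results for the ``only if'' direction. The only step you compress is the explicit sub-case split inside (A) and (B) --- either some prime $\equiv 5\pmod{12}$ (resp.\ $\equiv 7\pmod{12}$) divides $N$ to an odd power, handled by Theorem~\ref{power of primes 5 mod 12 is odd} (resp.\ Theorem~\ref{power of primes 7 mod 12 is odd}), or all such powers are even, handled by Theorem~\ref{power of prime 5 mod 12 is even}, whose hypothesis $25\mid N$ (not merely $5\mid N$ as in your item (ii)) is exactly what your identity $N=5^2(12n+1)$ supplies --- which is precisely how the paper assembles the argument.
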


\begin{theorem}
\label{MT 2}
Let $n\in \mathbb{N}$ such that $\ord_p(12n+1)\not\equiv 6 \pmod 7$ for every prime $p\equiv 1 \pmod {12}$. Then
$p_{26}(49n+3)=0$ if and only if there exists a prime number $p\equiv 3 \pmod 4$ such that $\ord_p(12n+1)$ is odd; and there exists a prime number $p^\prime \equiv 2 \pmod 3$ such that $\ord_{p^\prime}(12n+1)$ is odd.
\end{theorem}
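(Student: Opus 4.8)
The plan is to obtain Theorem~\ref{MT 2} as an application of Serre's Theorem~\ref{Serre theorem} after translating everything to the integer $N:=12(49n+3)+13$. First I would record the identity $N=49(12n+1)=7^2M$, where $M:=12n+1\equiv 1\pmod{12}$. Three elementary consequences organize the whole argument. Since $7\equiv 7\pmod{12}$ divides $N$, condition II of Statement~\ref{thm1} (that $N$ be a square all of whose prime factors are $\equiv 11\pmod{12}$) can never hold for such $N$. Next, the clause ``there exists a prime $p'\not\equiv 11\pmod{12}$ with $\ord_{p'}(N)>0$'' occurring in both N1 and N2 of Statement~\ref{negation statement} is automatically satisfied by $p'=7$. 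Finally, for every prime $p\neq 7$ one has $\ord_p(N)=\ord_p(M)$, while $\ord_7(N)=\ord_7(M)+2$ has the same parity as $\ord_7(M)$; hence the multiset of pairs $(p,\ \ord_p(\cdot)\bmod 2)$ is the same for $N$ and for $M$. In particular the existence of a prime $\equiv 3\pmod 4$ (resp.\ $\equiv 2\pmod 3$) of odd order is equivalent for $N$ and for $M$, so condition I for $N$ is literally the right-hand side of Theorem~\ref{MT 2}.

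With these in hand the theorem reduces to a single non-vanishing statement. By the first paragraph, Theorem~\ref{MT 2} is equivalent to the equivalence ``$p_{26}(49n+3)=0\iff$ condition I holds for $N$''. The implication $(\Leftarrow)$ is exactly Theorem~\ref{Serre theorem}. For $(\Rightarrow)$ I argue by contraposition: if condition I fails for $N$, then, because condition II also fails and the existence clause is free via $p'=7$, at least one of the hypotheses N1, N2 of Statement~\ref{negation statement} holds for $N$ — namely ``every prime $\equiv 3\pmod 4$ divides $N$ to even order'' or ``every prime $\equiv 2\pmod 3$ divides $N$ to even order''. Thus the entire content of Theorem~\ref{MT 2} comes down to proving $p_{26}(49n+3)\neq 0$ under one of these evenness hypotheses, i.e.\ to the partial cases of N1 and N2 for arguments of the special shape $N=7^2M$ established in the preceding sections.

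To prove that non-vanishing — and to see where the hypothesis $\ord_p(12n+1)\not\equiv 6\pmod 7$ comes from — I would reduce modulo $7$. Using Serre's description of $\eta^{26}$ through Hecke eigenforms with complex multiplication, I expect a congruence expressing $p_{26}(N)\bmod 7$ as a $7$-adic unit times a multiplicative function of $N$, whose local factor at a prime $p$ is a polynomial in the Satake data at $p$. For $p\equiv 1\pmod{12}$ one has $p^{12}\equiv 1\pmod 7$, the two Satake parameters become congruent modulo $7$, and the local factor collapses to $\ord_p(N)+1\pmod 7$; this is a unit precisely when $\ord_p(N)=\ord_p(M)\not\equiv 6\pmod 7$, which is the stated hypothesis. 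The local factors at the primes $\equiv 5,7,11\pmod{12}$, at the prime $7$ (which enters $N$ to the even exponent $\ord_7(M)+2$ on the side where it is inert), and the global constant are then to be shown nonzero modulo $7$ under whichever of N1, N2 is in force, yielding $p_{26}(49n+3)\not\equiv 0\pmod 7$ and hence the claim. The analogue for Theorem~\ref{MT 1} is identical with $7$ replaced by $5$, the congruence $p^{12}\equiv 1\pmod 5$ turning the forbidden residue into $4\pmod 5$.

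The hard part will be precisely this last step: establishing the mod-$7$ congruence in the form above and verifying that the local factors attached to the primes that are \emph{not} $\equiv 1\pmod{12}$ are units modulo $7$ in each of the cases N1 and N2. This is where the special shape $N=7^2M$ and the exact Gr\"ossencharacter (equivalently, the nebentypus) of $\eta^{26}$ must be used quantitatively, and it is what forces the technical hypothesis; the reductions of the first two paragraphs, by contrast, are purely combinatorial. A secondary point to check is the case $7\mid M$, where $\ord_7(N)=\ord_7(M)+2$ is larger but of unchanged parity, so that both the reduction of condition I and the non-vanishing of the factor at $7$ persist unchanged.
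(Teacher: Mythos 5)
Your first two paragraphs faithfully reproduce the paper's reduction: with $N=12(49n+3)+13=7^2(12n+1)$, parities of all $\ord_p$ are preserved, the ``if'' direction is Theorem \ref{Serre theorem}, and the theorem reduces to proving $p_{26}(49n+3)\neq 0$ when either every prime $\equiv 3\pmod 4$ or every prime $\equiv 2\pmod 3$ divides $N$ to even order. The gap is in your third paragraph: a uniform congruence modulo $7$ cannot prove this non-vanishing, because in part of the case division $p_{26}(49n+3)$ is genuinely divisible by $7$ (the constant $32617728=2^8\cdot 3^4\cdot 11^2\cdot 13$ is prime to $7$, so divisibility of the eigenform combination passes to $p_{26}$). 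Concretely: if every prime $\equiv 3\pmod 4$ divides $N$ to even order but some prime $q\equiv 5\pmod{12}$ divides $N$ to odd order, then $q$ is inert in $K_1$ and $t_{1\pm}(N)=0$ exactly by \eqref{eq:9}, while the factor of $t_{2\pm}(N)$ at $7$ is $7^{6\,\ord_7(N)}\equiv 0\pmod 7$ by \eqref{eq:7} (here $\ord_7(N)\geq 2$ is even), so $p_{26}(49n+3)\equiv 0\pmod 7$. Similarly, if every prime $\equiv 2\pmod 3$ divides $N$ to even order and some prime $\equiv 7\pmod{12}$ divides $N$ to odd order, then $t_{2\pm}(N)=0$; moreover the number of primes $\equiv 7\pmod{12}$ with odd exponent is even (since $N\equiv 1\pmod 4$), so at least one such prime $q$ is $\neq 7$, and for it Proposition \ref{t1+  prime = 7 mod 12}(ii) combined with \eqref{eq:4} gives $7\mid t_{1+}(q^{\ord_q N})/\sqrt{-3}$, whence $7\mid t_{1\pm}(N)$ and again $p_{26}(49n+3)\equiv 0\pmod 7$. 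In these subcases no mod-$7$ (or, for Theorem \ref{MT 1}, mod-$5$) argument can detect non-vanishing. Relatedly, your expectation that the factor at $7$ ``on the side where it is inert'' should be a unit mod $7$ is backwards: that factor is $\equiv 0\pmod 7$, and the proof exploits this rather than avoids it.

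What the paper actually does is split into two cases handled by two different mechanisms. If some prime inert in one of the two fields ($\equiv 5\pmod{12}$ for $K_1$, resp.\ $\equiv 7\pmod{12}$ for $K_2$) occurs to odd order, then that pair of coefficients vanishes identically and the other pair is shown nonzero by an exact sign-cancellation argument in characteristic zero (Theorems \ref{power of primes 5 mod 12 is odd} and \ref{power of primes 7 mod 12 is odd}): the products defining $t_{2+}(N)$ and $t_{2-}(N)$ (resp.\ $t_{1+}(N)$ and $t_{1-}(N)$) differ only by a sign $(-1)^\mu$ with $\mu$ even because $N\equiv 1\pmod{12}$, so their sum is twice a nonzero product. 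Only in the remaining case, where every prime $\not\equiv 1\pmod{12}$ divides $N$ to even order, does the paper run your mod-$7$ argument (Theorem \ref{power of prime 7 mod 12 is even}): there $49\mid N$ forces $7\mid t_{2+}(N)$, while $7\nmid t_{1+}(N)$ by Proposition \ref{t1+  prime = 7 mod 12} and Proposition \ref{t2+, t1+ prime = 1 mod 12}(iii) --- the latter being exactly where the hypothesis $\ord_p(12n+1)\not\equiv 6\pmod 7$ for $p\equiv 1\pmod{12}$ enters, as you correctly anticipated. So your reading of the role of the technical hypothesis is right, but your plan covers only one of the two cases; the other requires the sign-cancellation argument, not a congruence.
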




\section{\textbf{Preliminaries}}

Let us consider an imaginary quadratic field $K=\mathbb Q(\sqrt d)$ with discriminant $d$. We denote the ring of integers of $K$ by $\mathcal O_K$. Suppose, $c$ is a Hecke character on $K$ of exponent $k-1$ and conductor $\mathfrak f_c$, where $\mathfrak f_c$ is a nonzero ideal of $\mathcal O_K$.

Now we define the series
\begin{align*}
    \phi_{K,c}(z):=\sum_{\mathfrak a}c(\mathfrak a)q^{N(\mathfrak a)}\quad(q:=e^{2\pi iz}, \Im(z)>0),
\end{align*}
where the sum is taken over all ideals $\mathfrak a$ of $\mathcal O_K$ which are coprime to $\mathfrak f_c$. We have that $\phi_{K,c}(z)$ is a normalized Hecke eigenform of weight $k$, level $|d|\cdot N(\mathfrak f_c)$ and character $\epsilon_K\omega_c$, where $\epsilon_K$ and $\omega_c$ are Dirichlet characters defined as
\begin{align*}
    \epsilon_K(p)=\left(\frac{d}{p}\right)\text{ for primes }p\text{ such that }p\nmid2d
\end{align*}
and
\begin{align*}
    \omega_c(n)=\frac{c(n\mathcal O_K)}{n^{k-1}}\text{ for integers }n\text{ such that the ideal }n\mathcal O_K\text{ is coprime to }\mathfrak f_c.
\end{align*}
For more details please refer to \cite{1}.

Firstly, consider the number field $K_1=\mathbb Q(\sqrt{-3})$ of discriminant $-3$ and the ideal $\mathfrak f_1=4\sqrt{-3}\cdot\mathcal O_{K_1}$ of norm $48$. We now define Hecke characters $c_{1\pm}$ on $K_1$. Let $\mathfrak a$ be an ideal of $\mathcal O_{K_1}$ coprime to $\mathfrak f_1$. Since $\mathcal O_{K_1}$ is a principal ideal domain, we may choose a generator $a=x+y\sqrt{-3}$ of $\mathfrak a$ with $x,y\in\mathbb Z$, $x+y\equiv1\pmod2$ and $x\equiv1\pmod3$. We define
\begin{align*}
    c_{1\pm}(\mathfrak a)=(-1)^{(x\mp y-1)/2}a^{12}.
\end{align*}
It can be easily verified that $c_{1\pm}$ thus defined are Hecke characters on $K_1$ of exponent $12$ and conductor $\mathfrak f_1$, and that $\epsilon_{K_1}\omega_{c_{1+}}=\left(\frac{-1}{\bullet}\right)=\epsilon_{K_1}\omega_{c_{1-}}$.

Next, consider the number field $K_2=\mathbb Q(i)$ of discriminant $-4$ and the ideal $\mathfrak f_2=6\cdot\mathcal O_{K_2}$ of norm $36$. We now define Hecke characters $c_{2\pm}$ on $K_2$. Let $\mathfrak b$ be an ideal of $\mathcal O_{K_2}$ coprime to $\mathfrak f_2$. Since $\mathcal O_{K_2}$ is a principal ideal domain, we may choose a generator $b$ of $\mathfrak b$ and determine $m\in\mathbb Z/2\mathbb Z$ and $n\in\mathbb Z/8\mathbb Z$ such that
\begin{align*}
    b\equiv i^m\pmod{2\cdot\mathcal O_2} \quad \text{and} \quad b\equiv(1-i)^n\pmod{3\cdot\mathcal O_2}.
\end{align*}
We define
\begin{align*}
    c_{2\pm}(\mathfrak b)=(-1)^{3m}(\pm i)^{3n}b^{12}.
\end{align*}
It can be easily verified that $c_{2\pm}$ thus defined are Hecke characters on $K_2$ of exponent $12$ and conductor $\mathfrak f_2$, and that $\epsilon_{K_2}\omega_{c_{2+}}=\left(\frac{-1}{\bullet}\right)=\epsilon_{K_2}\omega_{c_{2-}}$.

By \cite{1}, we have
\begin{align*}
    \eta^{26}(12z)=\frac{1}{32617728}\left(\phi_{K_1,c_{1+}}(z)+\phi_{K_1,c_{1-}}(z)-\phi_{K_2,c_{2+}}(z)-\phi_{K_2,c_{2-}}(z)\right),
\end{align*}
and so $\eta^{26}(12z)$ is a linear combination of forms of \textit{CM}-type of weight $13$, level $144$ and character $\left(\frac{-1}{\bullet}\right)$. We write
\begin{align*}
    \phi_{K_1,c_{1\pm}}(z)=\sum_{n=1}^\infty t_{1\pm}(n)q^n \quad \text{and} \quad \phi_{K_2,c_{2\pm}}(z)=\sum_{n=1}^\infty t_{2\pm}(n)q^n.
\end{align*}
Thus we have
\begin{align*}
    \sum_{n=0}^\infty p_{26}(n)q^{12n+13}=\frac{1}{32617728}\left(\sum_{n=1}^\infty\big(t_{1+}(n)+t_{1-}(n)-t_{2+}(n)-t_{2-}(n)\big)q^n\right).
\end{align*}
Comparing the coefficients of the terms involving $q^{12n+13}$, we get
\begin{align}
p_{26}(n)=\frac{1}{32617728}\big(t_{1+}(12n+13)+t_{1-}(12n+13)-t_{2+}(12n+13)-t_{2-}(12n+13)\big)\label{eq:5}
\end{align}
for every nonnegative integer $n$.

Let the prime decomposition of $12n+13$ be given by
$$12n+13=\prod_{prime \ p} p^{\alpha_p}, \ \mathrm{where} \ \alpha_p:=ord_p(12n+13).$$
In view of the properties of Hecke eigenforms, we have
\begin{align}
    t_{1\pm}(12n+13)=\prod_p t_{1\pm}(p^{\alpha_p}) \quad \text{and} \quad t_{2\pm}(12n+13)=\prod_p t_{2\pm}(p^{\alpha_p}),\label{eq:8}
\end{align}
 Let us have a look at $t_{1+}(p^\alpha)$, $t_{1-}(p^\alpha)$, $t_{2+}(p^\alpha)$, $t_{2-}(p^\alpha)$ for a prime $p\geq5$. Let $t$ denote any one among $t_{1\pm}$ and $t_{2\pm}$. 
By the properties of Hecke eigenforms, 
if $p\equiv1\pmod4$, then
\begin{equation}
\label{Hecke relation for 1 mod 4}
t(p^r)=t(p)t(p^{r-1})-p^{12}t(p^{r-2})
\end{equation}
 for any integer $r\geq 2$. Equivalently 
 for any $\beta\geq0$,
\begin{align}
    t(p^{2\beta})&=t(p)^{2\beta}-\binom{2\beta-1}{1}p^{12}t(p)^{2\beta-2}+\dots+(-1)^{\beta-1}\binom{\beta+1}{2}p^{12\beta-12}t(p)^2+(-1)^\beta p^{12\beta},\label{eq:1}\\
    t(p^{2\beta+1})&=t(p)^{2\beta+1}-\binom{2\beta}{1}p^{12}t(p)^{2\beta-1}+\dots+(-1)^{\beta-1}\binom{\beta+2}{3}p^{12\beta-12}t(p)^3+(-1)^\beta\binom{\beta+1}{1}p^{12\beta}t(p).\label{eq:2}
\end{align}
On the other hand if $p\equiv3\pmod4$ then 
\begin{equation}
\label{Hecke relation for 3 mod 4}
t(p^r)=t(p)t(p^{r-1})+p^{12}t(p^{r-2})
\end{equation}
 for any integer $r\geq 2$. Equivalently 
for any $\beta\geq0$,
\begin{align}
    t(p^{2\beta})&=t(p)^{2\beta}+\binom{2\beta-1}{1}p^{12}t(p)^{2\beta-2}+\dots+\binom{\beta+1}{2}p^{12\beta-12}t(p)^2+p^{12\beta},\label{eq:3}\\
    t(p^{2\beta+1})&=t(p)^{2\beta+1}+\binom{2\beta}{1}p^{12}t(p)^{2\beta-1}+\dots+\binom{\beta+2}{3}p^{12\beta-12}t(p)^3+\binom{\beta+1}{1}p^{12\beta}t(p).\label{eq:4}
\end{align}

Firstly, let $p\equiv11\pmod{12}$. Since $p\equiv2\pmod3$ and $p\equiv3\pmod4$, therefore $p$ is inert in both $K_1$ and $K_2$. Consequently, $t_{1\pm}(p)=0=t_{2\pm}(p)$. By (\ref{eq:3}) and (\ref{eq:4}), we have
\begin{align}
    t_{1\pm}(p^\alpha)=t_{2\pm}(p^\alpha)=
    \begin{cases}
    p^{6\alpha} &\text{ if $\alpha$ is even,}\\
    0 &\text{ if $\alpha$ is odd.}
    \end{cases}\label{eq:6}
\end{align}

Secondly, let $p\equiv7\pmod{12}$. Since $p\equiv3\pmod4$, therefore $p$ is inert in $K_2$. Consequently, $t_{2\pm}(p)=0$. By (\ref{eq:3}) and (\ref{eq:4}), we have
\begin{align}
    t_{2\pm}(p^\alpha)=
    \begin{cases}
    p^{6\alpha} &\text{ if $\alpha$ is even,}\\
    0 &\text{ if $\alpha$ is odd.}
    \end{cases}\label{eq:7}
\end{align}
Since $p\equiv1\pmod3$, therefore we can write $p=z^2+3w^2=(z+w\sqrt{-3})(z-w\sqrt{-3})$ for some $z,w\in\mathbb Z$ with $z\equiv1\pmod3$. Moreover, as $z^2+3w^2\equiv3\pmod4$, we must have $z$ even and $w$ odd and thus we can choose $z,w$ such that $z+w\equiv1\pmod4$ and $z-w\equiv3\pmod4$. Hence, we have
\begin{align}
    t_{1+}(p)=-(z+w\sqrt{-3})^{12}+(z-w\sqrt{-3})^{12}=-t_{1-}(p).\label{eq:15}
\end{align}
By (\ref{eq:4}), for any odd $\alpha$, $t_{1+}(p^\alpha)=-t_{1-}(p^\alpha)$. By (\ref{eq:3}), for any even $\alpha$, $t_{1+}(p^\alpha)=t_{1-}(p^\alpha)$.

Thirdly, let $p\equiv5\pmod{12}$. Since $p\equiv2\pmod3$, therefore $p$ is inert in $K_1$. Consequently, $t_{1\pm}(p)=0$. By (\ref{eq:1}) and (\ref{eq:2}), we have
\begin{align}
    t_{1\pm}(p^\alpha)=
    \begin{cases}
    (-1)^{\alpha/2}p^{6\alpha} &\text{ if $\alpha$ is even,}\\
    0 &\text{ if $\alpha$ is odd.}
    \end{cases}\label{eq:9}
\end{align}
Since $p\equiv1\pmod4$, therefore we can write $p=x^2+y^2=(x+iy)(x-iy)$ for some $x,y\in\mathbb Z$ with $x$ odd and $y$ even. Moreover, as $x^2+y^2\equiv2\pmod3$, we must have $x\not\equiv0\pmod3$ and $y\not\equiv0\pmod3$ and thus we can choose $x,y$ such that $x\equiv1\equiv-y\pmod3$. Hence, we have
\begin{align}
    t_{2+}(p)=-i(x+iy)^{12}+i(x-iy)^{12}=-t_{2-}(p).\label{eq:16}
\end{align}
By (\ref{eq:2}), for any odd $\alpha$, $t_{2+}(p^\alpha)=-t_{2-}(p^\alpha)$. By (\ref{eq:1}), for any even $\alpha$, $t_{2+}(p^\alpha)=t_{2-}(p^\alpha)$.

Finally, let $p\equiv1\pmod{12}$. Since $p\equiv1\pmod3$, therefore we can write $p=z^2+3w^2=(z+w\sqrt{-3})(z-w\sqrt{-3})$ for some $z,w\in\mathbb Z$ with $z\not\equiv0\pmod3$. Moreover, as $z^2+3w^2\equiv1\pmod4$, we must have $z$ odd and $w$ even and thus we have $z+w\equiv z-w\pmod4$. Hence, we have
\begin{align}
    t_{1\pm}(p)=
    \begin{cases}
    (z+w\sqrt{-3})^{12}+(z-w\sqrt{-3})^{12} &\text{if $z+w\equiv1\equiv z-w\pmod4$,}\\
    -(z+w\sqrt{-3})^{12}-(z-w\sqrt{-3})^{12} &\text{if $z+w\equiv3\equiv z-w\pmod4$.}
    \end{cases}\label{eq:17}
\end{align}
Since $p\equiv1\pmod4$, therefore we can write $p=x^2+y^2=(x+iy)(x-iy)$ for some $x,y\in\mathbb Z$ with $x$ odd and $y$ even. Moreover, as $x^2+y^2\equiv1\pmod3$, we can choose $x,y$ such that either $x\not\equiv0\pmod3$, $y\equiv0\pmod3$ or $x\equiv0\pmod3$, $y\not\equiv0\pmod3$. Hence, we have
\begin{align}
    t_{2\pm}(p)=
    \begin{cases}
    (x+iy)^{12}+(x-iy)^{12} &\text{if }x\not\equiv0\pmod3\text{ , }y\equiv0\pmod3,\\
    -(x+iy)^{12}-(x-iy)^{12} &\text{if }x\equiv0\pmod3\text{ , }y\not\equiv0\pmod3.
    \end{cases}\label{eq:10}
\end{align}
From (\ref{eq:5}), it is clear that $t_{1+}(p)+t_{1-}(p)-t_{2+}(p)-t_{2-}(p)\equiv0\pmod{3}$. Note that
\begin{align*}
    (z+w\sqrt{-3})^{12}+(z-w\sqrt{-3})^{12}+(x+iy)^{12}+(x-iy)^{12}\equiv2(x^{12}+z^{12})\not\equiv0\pmod3.
\end{align*}
Thus, we must have $x\equiv1\pmod3$, $y\equiv0\pmod3$ when $z+w\equiv1\equiv z-w\pmod4$ and $x\equiv0\pmod3$, $y\equiv1\pmod3$ when $z+w\equiv3\equiv z-w\pmod4$. In other words, the same sign appears on both $t_{1+}(p)$ and $t_{2+}(p)$.

\section{\textbf{Some Important Lemmas}}

In this section, we study some properties of $t_{1\pm}$ and $t_{2\pm}$.
\subsection{Divisibility properties of $t_{1\pm}$ and $t_{2\pm}$}
\begin{theorem}
\label{theorem 1 appendix}
Let $p\equiv 1 \pmod {12}$ be a prime. If $t_{2+}(p)=2 \pmod 5$, then 
\begin{equation*}
t_{2+}(p^{5l+n})\equiv t_{2+}(p^{5(l-1)+n}) \pmod 5 \ \mathrm{for} \ l\in \mathbb{N} \ \mathrm{and} \ n\geq 0.
\end{equation*}
\end{theorem}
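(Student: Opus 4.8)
The plan is to reduce the Hecke recurrence modulo $5$ to a constant-coefficient linear recurrence over $\mathbb{F}_5$ and solve it in closed form. Write $x_r := t_{2+}(p^r)$, so that $x_0 = 1$ by normalization and $x_1 = t_{2+}(p) \equiv 2 \pmod 5$ by hypothesis. Since $p \equiv 1 \pmod 4$, relation (\ref{Hecke relation for 1 mod 4}) gives $x_r = t_{2+}(p)\, x_{r-1} - p^{12} x_{r-2}$ for $r \geq 2$.

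First I would observe that $p \equiv 1 \pmod{12}$ forces $p \geq 13$, so in particular $p \neq 5$ and $\gcd(p,5)=1$; Fermat's little theorem then yields $p^{12} = (p^4)^3 \equiv 1 \pmod 5$. Feeding this together with $t_{2+}(p) \equiv 2 \pmod 5$ into the recurrence collapses it to
\[
x_r \equiv 2 x_{r-1} - x_{r-2} \pmod 5, \qquad r \geq 2,
\]
a linear recurrence over $\mathbb{F}_5$ with initial data $x_0 \equiv 1$ and $x_1 \equiv 2$.

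Next I would solve this recurrence explicitly. Its characteristic polynomial is $T^2 - 2T + 1 = (T-1)^2$, which has a double root at $T = 1$, so the general solution modulo $5$ has the form $x_r \equiv A + Br$. Matching $x_0 \equiv 1$ and $x_1 \equiv 2$ forces $A \equiv B \equiv 1$, giving the closed form $x_r \equiv r + 1 \pmod 5$. (If one prefers to avoid the repeated-root formula, the same conclusion follows by computing $(x_5,x_6) \equiv (1,2) \equiv (x_0,x_1) \pmod 5$ and invoking the second-order recurrence to deduce period $5$ directly.)

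Finally, the closed form makes the periodicity transparent: for any $l \in \mathbb{N}$ and $n \geq 0$,
\[
x_{5l+n} \equiv (5l+n)+1 \equiv n+1 \equiv (5(l-1)+n)+1 \equiv x_{5(l-1)+n} \pmod 5,
\]
which is exactly the assertion $t_{2+}(p^{5l+n}) \equiv t_{2+}(p^{5(l-1)+n}) \pmod 5$. I do not anticipate a genuine obstacle here; the only step needing a word of care is the reduction $p^{12} \equiv 1 \pmod 5$, which hinges on excluding $p = 5$, guaranteed in our setting by $p \equiv 1 \pmod{12}$.
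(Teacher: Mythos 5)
Your proof is correct, and it reaches the conclusion by a genuinely different (and in fact slightly stronger) route than the paper. Both arguments begin with the same reduction: $p\equiv 1\pmod{12}$ rules out $p=5$, so $p^{12}\equiv 1\pmod 5$, and the Hecke relation \eqref{Hecke relation for 1 mod 4} collapses to $x_r\equiv 2x_{r-1}-x_{r-2}\pmod 5$, where $x_r:=t_{2+}(p^r)$. The paper then never touches initial values: it simply applies this congruence repeatedly and lets the coefficients telescope, $x_{5l+n}\equiv 3x_{5l+n-2}-2x_{5l+n-3}\equiv 4x_{5l+n-3}-3x_{5l+n-4}\equiv 5x_{5l+n-4}-4x_{5l+n-5}\equiv x_{5(l-1)+n}\pmod 5$, so the period-$5$ statement holds for \emph{every} solution of the congruence recurrence, whatever $x_0,x_1$ may be. You instead solve the recurrence: the characteristic polynomial $(T-1)^2$ has the double root $1$, and the normalization $x_0=t_{2+}(1)=1$ together with $x_1\equiv 2$ pins down the closed form $x_r\equiv r+1\pmod 5$, from which the periodicity is read off. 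The extra input $t_{2+}(1)=1$ is standard (the paper itself invokes it at \eqref{2929}), and your closed form buys more than the theorem asks for: it yields at once the values listed in \eqref{2929} and the criterion that $5\mid t_{2+}(p^\alpha)$ if and only if $\alpha\equiv 4\pmod 5$, i.e., it packages in one stroke this theorem together with the separate computation that Proposition \ref{t2+, t1+ prime = 1 mod 12}(i) later combines it with. The double-root solution formula is legitimate over $\mathbb{F}_5$ (one checks directly that $r\mapsto r$ solves the recurrence), and in any case your parenthetical fallback --- verifying $(x_5,x_6)\equiv(x_0,x_1)\pmod 5$ and using that two consecutive values determine a solution of a second-order recurrence --- makes that step airtight.
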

\begin{proof}
Recall that we have the Hecke relation (cf. \eqref{Hecke relation for 1 mod 4})
\begin{equation}
\label{eq 4200}
t_{2+}(p^r)\equiv 2t_{2+}(p^{r-1})-t_{2+}(p^{r-2}) \pmod 5 \ \mathrm{for} \ r\geq 2.
\end{equation}
Let $l\in \mathbb{N}$ and $n\geq 0$.
By repeated application of \eqref{eq 4200}, we get
\begin{align*}
t_{2+}(p^{5l+n})&\equiv 2t_{2+}(p^{5l+n-1})-t_{2+}(p^{5l+n-2}) \pmod 5\\
&\equiv 2(2t_{2+}(p^{5l+n-2})-t_{2+}(p^{5l+n-3}))- t_{2+}(p^{5l+n-2}) \pmod 5\\
&\equiv 3t_{2+}(p^{5l+n-2})-2t_{2+}(p^{5l+n-3}) \pmod 5\\
&\equiv 3(2t_{2+}(p^{5l+n-3})-t_{2+}(p^{5l+n-4}))- 2t_{2+}(p^{5l+n-3}) \pmod 5\\
&\equiv 4t_{2+}(p^{5l+n-3})-3t_{2+}(p^{5l+n-4}) \pmod 5\\
&\equiv 4(2t_{2+}(p^{5l+n-4})-t_{2+}(p^{5l+n-5}))- 3t_{2+}(p^{5l+n-4}) \pmod 5\\
&\equiv 5t_{2+}(p^{5l+n-4})-4t_{2+}(p^{5l+n-5}) \pmod 5\\
&\equiv t_{2+}(p^{5(l-1)+n}) \pmod 5.
\end{align*}
This completes the proof.
\end{proof}
\begin{remar}
Observe that a similar argument as in the proof of Theorem \ref{theorem 1 appendix} shows that for any prime $p\equiv 1 \pmod {12}$, if $t_{2+}(p)=3 \pmod 5$, then 
$t_{2+}(p^{5l+n})\equiv - t_{2+}(p^{5(l-1)+n}) \pmod 5 \ \mathrm{for} \ l\in \mathbb{N} \ \mathrm{and} \ n\geq 0.$
\end{remar}

\begin{theorem}
\label{theorem 2 appendix}
Let $p\equiv 1 \pmod {12}$ be a prime. If $t_{1+}(p)=2 \pmod 7$, then 
\begin{equation*}
t_{1+}(p^{7l+n})\equiv t_{1+}(p^{7(l-1)+n}) \pmod 7 \ \mathrm{for} \ l\in \mathbb{N} \ \mathrm{and} \ n\geq 0.
\end{equation*}
\end{theorem}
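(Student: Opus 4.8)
The plan is to mirror the proof of Theorem~\ref{theorem 1 appendix}, replacing the modulus $5$ by $7$. First I would record the relevant Hecke relation: since $p\equiv 1\pmod{12}$ we have $p\equiv 1\pmod 4$, so \eqref{Hecke relation for 1 mod 4} gives $t_{1+}(p^r)=t_{1+}(p)\,t_{1+}(p^{r-1})-p^{12}\,t_{1+}(p^{r-2})$ for $r\geq 2$. The key arithmetic input is that $p^{12}\equiv 1\pmod 7$: indeed $p\equiv 1\pmod{12}$ forces $p\neq 7$, so Fermat's little theorem gives $p^{6}\equiv 1\pmod 7$ and hence $p^{12}=(p^{6})^{2}\equiv 1\pmod 7$. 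Combining this with the hypothesis $t_{1+}(p)\equiv 2\pmod 7$, the Hecke relation reduces modulo $7$ to the linear recurrence
\begin{equation*}
t_{1+}(p^r)\equiv 2\,t_{1+}(p^{r-1})-t_{1+}(p^{r-2})\pmod 7\qquad (r\geq 2).
\end{equation*}

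Next I would exploit this recurrence by repeated back-substitution, exactly as in the telescoping after \eqref{eq 4200}. Each substitution of the relation into itself raises the coefficient of the leading surviving term by $1$: after applying the recurrence $k$ times the expression has the shape $(k+1)\,t_{1+}(p^{7l+n-k})-k\,t_{1+}(p^{7l+n-k-1})\pmod 7$. Taking $k=6$ makes the leading coefficient equal to $7\equiv 0\pmod 7$, and one is left with
\begin{equation*}
t_{1+}(p^{7l+n})\equiv 7\,t_{1+}(p^{7l+n-6})-6\,t_{1+}(p^{7l+n-7})\equiv -6\,t_{1+}(p^{7l+n-7})\equiv t_{1+}(p^{7(l-1)+n})\pmod 7,
\end{equation*}
using $-6\equiv 1\pmod 7$. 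Conceptually, the reason the period equals the modulus is that the characteristic polynomial $x^{2}-2x+1=(x-1)^{2}$ has a double root at $1$, so every solution of the recurrence is of the form $A+Br$ modulo $7$, and $A+B(r+7)\equiv A+Br\pmod 7$; the explicit telescoping above is simply the elementary incarnation of this fact.

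There is essentially no hard step; the argument is a direct transcription of the mod-$5$ case, and the only points requiring care are bookkeeping ones. I must confirm that the recurrence is applied only to exponents $\geq 2$ throughout the back-substitution: the exponents being expanded run from $7l+n$ down to $7l+n-5$, and since $l\in\mathbb{N}$ gives $7l+n\geq 7$, the smallest such exponent is $\geq 2$, with the base values $t_{1+}(p^{0})=1$ and $t_{1+}(p^{1})=t_{1+}(p)$ supplying the terminal terms (here $7(l-1)+n\geq 0$). I must also track the telescoping coefficients carefully so that the terminal leading coefficient is exactly $7$, and not some other multiple, which is what produces the clean congruence. Finally, I would remark, in parallel with the Remark following Theorem~\ref{theorem 1 appendix}, that the value $t_{1+}(p)\equiv 2\pmod 7$ is precisely what makes the characteristic polynomial a perfect square; for other residues the polynomial $x^{2}-t_{1+}(p)x+1$ need not have a repeated root in $\mathbb{F}_{7}$, so the clean statement is special to this hypothesis.
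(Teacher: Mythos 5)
Your proposal is correct and is essentially the paper's own proof: both reduce the Hecke relation \eqref{Hecke relation for 1 mod 4} modulo $7$ (using $p^{12}\equiv 1 \pmod 7$ and the hypothesis $t_{1+}(p)\equiv 2 \pmod 7$) and then telescope the recurrence six times until the leading coefficient becomes $7\equiv 0 \pmod 7$, leaving $-6\,t_{1+}(p^{7(l-1)+n})\equiv t_{1+}(p^{7(l-1)+n}) \pmod 7$. Your closing observation that the reduced recurrence has characteristic polynomial $(x-1)^{2}$, so its solutions are linear in $r$ modulo $7$ and hence $7$-periodic, is a nice conceptual gloss on the same computation, not a different argument.
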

\begin{proof}
Recall that we have the Hecke relation (cf. \eqref{Hecke relation for 1 mod 4})
\begin{equation}
\label{eq 4300}
t_{1+}(p^r)\equiv 2t_{1+}(p^{r-1})-t_{1+}(p^{r-2}) \pmod 7 \ \mathrm{for} \ r\geq 2.
\end{equation}
Let $l\in \mathbb{N}$ and $n\geq 0$.
By repeated application of \eqref{eq 4300}, we get
\begin{align*}
t_{1+}(p^{7l+n})&\equiv 2t_{1+}(p^{7l+n-1})-t_{1+}(p^{7l+n-2}) \pmod 7\\
&\equiv 2(2t_{1+}(p^{7l+n-2})-t_{1+}(p^{7l+n-3}))- t_{1+}(p^{7l+n-2}) \pmod 7\\
&\equiv 3t_{1+}(p^{7l+n-2})-2t_{1+}(p^{7l+n-3}) \pmod 7\\
&\equiv 3(2t_{1+}(p^{7l+n-3})-t_{1+}(p^{7l+n-4}))- 2t_{1+}(p^{7l+n-3}) \pmod 7\\
&\equiv 4t_{1+}(p^{7l+n-3})-3t_{1+}(p^{7l+n-4}) \pmod 7\\
&\equiv 4(2t_{1+}(p^{7l+n-4})-t_{1+}(p^{7l+n-5}))- 3t_{1+}(p^{7l+n-4}) \pmod 7\\
&\equiv 5t_{1+}(p^{7l+n-4})-4t_{1+}(p^{7l+n-5}) \pmod 7\\
&\equiv 5(2t_{1+}(p^{7l+n-5})-t_{1+}(p^{7l+n-6})) - 4t_{1+}(p^{7l+n-5}) \pmod 7\\
&\equiv 6t_{1+}(p^{7l+n-5})-5t_{1+}(p^{7l+n-6}) \pmod 7\\
&\equiv 6(2t_{1+}(p^{7l+n-6})-t_{1+}(p^{7l+n-7})) -5t_{1+}(p^{7l+n-6})\pmod 7\\
&\equiv 7t_{1+}(p^{7l+n-6})-6t_{1+}(p^{7l+n-7}) \pmod 7\\
&\equiv t_{1+}(p^{7(l-1)+n}) \pmod 7.
\end{align*}
This completes the proof.
\end{proof}
\begin{remar}\label{theorem 2 appendix remark}
Observe that a similar argument as in the proof of Theorem \ref{theorem 2 appendix} shows that for any prime $p\equiv 1 \pmod {12}$, if $t_{1+}(p)=5 \pmod 7$, then 
$t_{1+}(p^{7l+n})\equiv - t_{1+}(p^{7(l-1)+n}) \pmod 7 \ \mathrm{for} \ l\in \mathbb{N} \ \mathrm{and} \ n\geq 0.$
\end{remar}

\begin{prop}
\label{t2+ prime = 5 mod 12}
For any prime $p\equiv 5 \pmod {12}$ we have
\begin{enumerate}
\item [(i)] $5\nmid t_{2+}(5)$, $5|t_{2+}(p)$ for $p\ne 5$. Hence $5\nmid t_{2+}(p^{2\alpha})$ for any prime $p \equiv 5 \pmod {12}$.
\item [(ii)] $t_{2+}(p)\equiv 0,2,5 \pmod 7$. Furthermore, $7|t_{2+}(p)$ if and only if $p\equiv 1,2,4 \pmod 7$.
\end{enumerate}

\end{prop}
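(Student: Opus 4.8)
The plan is to work directly from the explicit formula \eqref{eq:16}, namely $t_{2+}(p)=-i(x+iy)^{12}+i(x-iy)^{12}=i(\bar\alpha^{12}-\alpha^{12})$ where $\alpha=x+iy$ satisfies $\alpha\bar\alpha=x^2+y^2=p$, and to reduce this quantity modulo $5$ and modulo $7$ using Fermat's little theorem in the appropriate residue ring. Writing $\alpha^{12}=A+Bi$ with $A,B\in\mathbb Z$, one checks from $\bar\alpha^{12}=A-Bi$ that $t_{2+}(p)=2B$, so the whole question reduces to controlling the residue of $B=\Im(\alpha^{12})$.

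For part (i), I would first dispose of $p=5$ by the direct computation $(1+2i)^{12}=11753+10296i$, giving $t_{2+}(5)=20592\equiv 2\pmod 5$. For a prime $p\equiv 5\pmod{12}$ with $p\neq 5$, the key point is that $5=(2+i)(2-i)$ splits in $\mathbb Z[i]$ while $N(\alpha)=p$ is coprime to $5$, so $(2\pm i)\nmid\alpha$ and $\alpha$ is a unit modulo each prime above $5$. Fermat then gives $\alpha^{12}\equiv 1$ modulo $2+i$ and modulo $2-i$; reducing $\alpha^{12}=A+Bi$ through the identifications $i\equiv 3\pmod{2+i}$ and $i\equiv 2\pmod{2-i}$ yields the pair $A+3B\equiv 1$ and $A+2B\equiv 1\pmod 5$, whose difference forces $B\equiv 0\pmod 5$, i.e.\ $5\mid t_{2+}(p)$. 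For the final ``Hence'' assertion I feed this into the recursion \eqref{eq:1} (valid since $p\equiv 1\pmod 4$): modulo $5$ every summand carrying a factor of $t_{2+}(p)$ dies, so for $p\neq 5$ one gets $t_{2+}(p^{2\beta})\equiv(-1)^\beta p^{12\beta}\equiv(-1)^\beta\pmod 5$, while for $p=5$ only the leading term survives and $t_{2+}(5)^{2\beta}\equiv 2^{2\beta}\equiv(-1)^\beta\pmod 5$; in either case the result is a unit.

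For part (ii) I would pass to $\mathbb F_{49}=\mathbb Z[i]/(7)$, since $7\equiv 3\pmod 4$ is inert. The crucial observation is that Frobenius acts as complex conjugation: $\alpha^7\equiv\bar\alpha\pmod 7$, whence $\alpha^8\equiv\alpha\bar\alpha=N(\alpha)=p\pmod 7$ and therefore $\alpha^{24}\equiv p^3\pmod 7$. By Euler's criterion $p^3\equiv\left(\tfrac{p}{7}\right)\pmod 7$, so $\alpha^{24}\equiv 1$ exactly when $p\equiv 1,2,4\pmod 7$ and $\alpha^{24}\equiv -1$ otherwise. Using $\bar\alpha^{12}\equiv\alpha^{84}\equiv\alpha^{36}\pmod 7$ (as $\alpha^{48}=1$ in $\mathbb F_{49}^\times$), I rewrite $t_{2+}(p)\equiv i\,\alpha^{12}(\alpha^{24}-1)\pmod 7$. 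In the quadratic-residue case this is $\equiv 0$; in the non-residue case $\alpha^{24}\equiv -1$ shows $\alpha^{12}$ is a square root of $-1$, hence $\alpha^{12}\equiv\pm i$, and substituting into $t_{2+}(p)=i(\bar\alpha^{12}-\alpha^{12})$ gives $t_{2+}(p)\equiv 2$ or $\equiv 5\pmod 7$. This simultaneously establishes $t_{2+}(p)\equiv 0,2,5\pmod 7$ and the criterion $7\mid t_{2+}(p)\iff p\equiv 1,2,4\pmod 7$.

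The routine parts are the two finite-field computations and the single numerical evaluation at $p=5$; the one genuinely load-bearing idea is the Frobenius identity $\alpha^7\equiv\bar\alpha\pmod 7$, which collapses the degree-$12$ power into the norm $p$ and thereby converts the entire mod-$7$ question into the behaviour of $p^3\pmod 7$, i.e.\ into a Legendre symbol. I expect the main care to be needed in verifying that $\alpha\not\equiv 0$ (so that $\alpha^{48}=1$ may legitimately be invoked, using $p\neq 5,7$) and in bookkeeping the two square roots $\pm i$ so as to land exactly on the residues $2$ and $5$ rather than on some other pair.
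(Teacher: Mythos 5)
Your proof is correct, and it takes a genuinely different route from the paper's. The paper argues entirely through the explicit degree-$12$ polynomial expansion of $-i(x+iy)^{12}+i(x-iy)^{12}$ in the integers $x,y$ with $p=x^2+y^2$: for (i) it splits into the cases $xy\equiv 0$ and $xy\not\equiv 0\pmod 5$ (deducing $x^2\equiv y^2\pmod 5$ in the latter, since $x^2y^2\equiv-1$ would force $5\mid p$) and exhibits an explicit factorization of the reduced polynomial with factor $x^2-y^2$; for (ii) it factors the mod-$7$ reduction as $xy(x^2-y^2)(3x^8+4x^6y^2+6x^4y^4+4x^2y^6+3y^8)$, proves the last factor never vanishes via the identity \eqref{eq 1900}, determines the value set $\{0,2,5\}$ by a finite enumeration in \texttt{MAGMA}, and settles the ``if and only if'' by a separate computation expanding $(x^2-y^2)^6\equiv 1\pmod 7$. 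You instead work structurally in $\mathbb{Z}[i]$: for (i) the splitting $5=(2+i)(2-i)$ together with Fermat's little theorem in the two residue fields $\mathbb{Z}[i]/(2\pm i)\cong\mathbb{F}_5$ forces $\Im(\alpha^{12})\equiv 0\pmod 5$, and for (ii) the fact that Frobenius on $\mathbb{F}_{49}=\mathbb{Z}[i]/(7)$ is complex conjugation gives $\alpha^8\equiv p$, hence $\alpha^{24}\equiv p^3\equiv\left(\tfrac{p}{7}\right)\pmod 7$ by Euler's criterion, after which $t_{2+}(p)\equiv i\,\alpha^{12}(\alpha^{24}-1)\pmod 7$ yields $0$ in the residue case and $\pm 2$ (i.e.\ $2$ or $5$) in the non-residue case because $\alpha^{12}$ must then be $\pm i$. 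Your route buys a conceptual explanation of the criterion --- $7\mid t_{2+}(p)$ precisely when $p$ is a quadratic residue mod $7$ --- and eliminates both the computer enumeration and the ad hoc polynomial identities, while the paper's route stays within elementary polynomial arithmetic and needs nothing beyond the closed formula \eqref{eq:16}. The points you flag as needing care do check out: $\alpha$ is a unit modulo the relevant primes (using $p\neq 5$ in part (i), and $\gcd(p,7)=1$, which is automatic for $p\equiv 5\pmod{12}$, in part (ii)), the sign bookkeeping $\alpha^{12}\equiv\pm i$ lands exactly on the residues $2$ and $5$, and your handling of the ``Hence'' clauses via \eqref{eq:1} (all terms containing $t_{2+}(p)$, respectively $p^{12}$, vanish mod $5$) matches the paper's.
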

\begin{proof}
\begin{enumerate}
\item [(i)]
From \eqref{eq:16} we get $t_{2+}(5)=20592$ (note that we can choose $x=1,y=2$). Thus $5\nmid t_{2+}(5)$. Consequently, from \eqref{eq:1} we conclude that $5\nmid t_{2+}(5^{2\alpha})$ for all $\alpha\in \mathbb{N}$.

Now let $p\ne 5$ be a prime such that $p\equiv 5 \pmod {12}$. From \eqref{eq:16} we have
\begin{equation}
\label{expansion of t2+ for p = 5 mod 12}
t_{2+}(p)=-i(x+iy)^{12}+i(x-iy)^{12}=2(12 x^{11}y-220x^9y^3+792x^7y^5-792x^5y^7+220x^3y^9-12xy^{11}),
\end{equation}
where $p=x^2+y^2$. Clearly, $(x,y)\not\equiv (0,0)\pmod 5$. 
If $xy\equiv 0 \pmod 5$, then \eqref{expansion of t2+ for p = 5 mod 12} implies $5|t_{2+}(p)$. Since $5\nmid p$, from \eqref{eq:1} we conclude that $5\nmid t_{2+}(p^{2\alpha})$ for all $\alpha\in \mathbb{N}$.

Now assume that $xy\not\equiv 0 \pmod 5$. This assumption clearly forces $x^2\equiv \pm 1 \pmod 5$ and $y^2\equiv \pm 1 \pmod 5$ (since $a^4\equiv 1 \pmod 5$ for all $a\in (\Z/5\Z)\char`\\ \{0\}$). Observe that if $x^2y^2\equiv -1 \pmod 5$, then $p=x^2+y^2 \equiv 0 \pmod 5$. This contradicts the fact that $p>5$ is a prime. Hence $x^2y^2\equiv 1 \pmod 5$. Consequently, $(x^2-y^2)\equiv 0\pmod 5$. 
Now using \eqref{expansion of t2+ for p = 5 mod 12}, we can write
\begin{align*}
t_{2+}(p)&\equiv -x^{11}y-x^7y^5+x^5y^7+xy^{11} \pmod 5\\
&\equiv -(x^2-y^2)(x^9y+x^7y^3+2x^5y^5+x^3y^7+xy^9) \pmod 5.
\end{align*}
 This shows that $5|t_{2+}(p)$. Since $5\nmid p$, from \eqref{eq:1} we conclude that $5\nmid t_{2+}(p^{2\alpha})$ for all $\alpha\in \mathbb{N}$.

\item [(ii)] 
Recall that $p=x^2+y^2 \equiv 5 \pmod {12}$ is a prime  and 
\begin{equation}
\label{expansion of t2+ for p = 5 mod 12 repeated}
t_{2+}(p)=-i(x+iy)^{12}+i(x-iy)^{12}=2(12 x^{11}y-220x^9y^3+792x^7y^5-792x^5y^7+220x^3y^9-12xy^{11}).
\end{equation}
Taking reduction modulo $7$, we can write
\begin{equation}
\label{expansion of t2+ for p = 5 mod 12 reduction mod 7}
t_{2+}(p)\equiv xy(x^2-y^2)(3x^8 + 4x^6y^2 + 6x^4y^4 + 4x^2y^6 + 3y^8) \pmod 7.
\end{equation}

By explicit computation (or using computer), it can be checked that for $x,y\in \Z/7\Z$, the polynomial in \eqref{expansion of t2+ for p = 5 mod 12 reduction mod 7} takes values only $0,2,5$ modulo $7$. 
For example, using the following code in \texttt{MAGMA} (\url{http://magma.maths.usyd.edu.au/calc/}), we can conclude that the polynomial in \eqref{expansion of t2+ for p = 5 mod 12 reduction mod 7} takes values only $0,2,5$ modulo $7$:
\begin{verbatim}
for x,y in GF(7) do
x*y*(x^2-y^2)*(3*x^8 + 4*x^6*y^2 + 6*x^4*y^4 + 4*x^2*y^6 + 3*y^8);
end for;
\end{verbatim}

In particular, $t_{2+}(p)$ modulo $7$ takes every value in $\{0,2,5\}$ at least once. For example, using \eqref{expansion of t2+ for p = 5 mod 12 repeated} it is easy to see that $t_{2+}(29)=t_{2+}(5^2+2^2)\equiv 0 \pmod 7,$ $t_{2+}(257)=t_{2+}(1+16^2)\equiv 5 \pmod 7$ and $t_{2+}(677)=t_{2+}(1+26^2)\equiv 2 \pmod 7$. Thus we conclude that $t_{2+}(p)\equiv 0,2,5 \pmod 7$.

Since $p\ne 7$ is a prime, we have $(x,y)\not \equiv (0,0)\pmod 7$. Observe that, the following is true
\begin{equation}
\label{eq 1900}
(x^{12}+y^{12})(x^6+y^6)\equiv (3x^8 + 4x^6y^2 + 6x^4y^4 + 4x^2y^6 + 3y^8)\cdot f(x,y) \pmod 7,
\end{equation}
where $f(x,y):= 5x^{10} + 5x^8y^2 + 2x^6y^4 + 2x^4y^6 + 5x^2y^8 +5y^{10}$. Since $a^6\equiv 1 \pmod 7$ for all $a\in (\Z/7\Z)\char`\\ \{0\}$, we have $(x^{12}+y^{12})(x^6+y^6)\not \equiv 0 \pmod 7$ for all $x,y\in \Z/7\Z$ with $(x,y)\not \equiv (0,0) \pmod 7$. Thus from \eqref{eq 1900} we get $$(3x^8 + 4x^6y^2 + 6x^4y^4 + 4x^2y^6 + 3y^8)\not \equiv 0 \pmod 7 \ \mathrm{for \ all} \ x,y\in \Z/7\Z \ \mathrm{with} \ (x,y)\not \equiv (0,0) \pmod 7.$$


Now from \eqref{expansion of t2+ for p = 5 mod 12 reduction mod 7} we see that $7|t_{2+}(p)$ if and only if $xy(x^2-y^2) \equiv 0 \pmod 7$, i.e, either $xy\equiv 0 \pmod 7$ or $(x^2-y^2) \equiv 0 \pmod 7$. The condition $xy \equiv 0 \pmod 7$ (i.e either $x\equiv 0 \pmod 7$ or $y \equiv 0 \pmod 7$) implies that $p=x^2+y^2\equiv 1,2,4 \pmod 7$. On the other hand, the condition $x^2-y^2\equiv 0 \pmod 7$ implies that $p=x^2+y^2\equiv 2y^2\equiv 1,2,4 \pmod 7$.

Conversely, let $p=x^2+y^2\equiv 1,2,4 \pmod 7$. Since  
$a^2\equiv 1,2,4 \pmod 7$ for any $a\in (\Z/7\Z)\char`\\ \{0\}$, we have
\begin{equation}
\label{eq 1919}
 x^2y^2(x^2+y^2)\not \equiv -1 \pmod 7.
 \end{equation}
Suppose that $xy(x^2-y^2)\not \equiv 0 \pmod 7$. In particular, this implies $x\not \equiv 0 \pmod 7, y\not\equiv 0 \pmod 7$ and $x^2-y^2 \not\equiv 0\pmod 7$.
Since $a^6\equiv 1 \pmod 7$ for all $a\in (\Z/7\Z)\char`\\ \{0\}$, we have 
\begin{equation}
\label{eq 2100}
1\equiv (x^2-y^2)^6 \equiv x^{12} + x^{10}y^2 + x^8y^4 + x^6y^6 + x^4y^8 + x^2y^{10} + y^{12} \equiv 3+2x^2y^2(x^2+y^2) \pmod 7.
\end{equation}
Since $gcd(2,7)=1$, from \eqref{eq 2100} we obtain
\begin{equation}
x^2y^2(x^2+y^2) \equiv -1 \pmod 7.
\end{equation}
This contradicts \eqref{eq 1919}. Therefore, we must have $xy(x^2-y^2)\equiv 0 \pmod 7$. Consequently, from \eqref{expansion of t2+ for p = 5 mod 12 reduction mod 7} we conclude that $7|t_{2+}(p)$. This completes the proof.
\end{enumerate}
\end{proof}

%

\begin{prop}
\label{t1+  prime = 7 mod 12}
For any prime $p\equiv 7 \pmod {12}$, we have 
\begin{enumerate}
\item [(i)] $t_{1+}(p)=5k\sqrt{-3}$ for some $k\in \Z$. Hence $5\nmid t_{1+}(p^{2\alpha})$ for every prime $p\equiv 7 \pmod {12}$.\label{prop 3.2 i}
\item [(ii)] $7\nmid \frac{t_{1+}(7)}{\sqrt{-3}}$, $7|\frac{t_{1+}(p)}{\sqrt{-3}}$ for $p\ne 7$. Hence $7\nmid t_{1+}(p^{2\alpha})$ for any prime $p \equiv 7 \pmod {12}$.\label{prop 3.2 ii}
\end{enumerate}
\end{prop}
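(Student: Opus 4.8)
The plan is to follow the template of Proposition~\ref{t2+ prime = 5 mod 12} almost verbatim, with $t_{2+}$ replaced by $t_{1+}$, the congruence $5\pmod{12}$ replaced by $7\pmod{12}$, and the roles of the moduli $5$ and $7$ kept parallel. The starting point is the explicit formula \eqref{eq:15}: for a prime $p\equiv 7\pmod{12}$ we write $p=z^2+3w^2$ and
\begin{equation*}
t_{1+}(p)=-(z+w\sqrt{-3})^{12}+(z-w\sqrt{-3})^{12}=-2\sqrt{-3}\,g(z,w),
\end{equation*}
where, expanding by the binomial theorem and extracting $\sqrt{-3}$ from the odd-index terms,
\begin{equation*}
g(z,w)=12z^{11}w-660z^9w^3+7128z^7w^5-21384z^5w^7+17820z^3w^9-2916zw^{11}\in\Z.
\end{equation*}
Thus $\tfrac{t_{1+}(p)}{\sqrt{-3}}=-2g(z,w)$ is an integer, and both parts reduce to understanding $g(z,w)$ modulo $5$ and modulo $7$.

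For part (i) I would reduce $g$ modulo $5$ and collapse exponents using $a^5\equiv a\pmod 5$, which turns every monomial into $z^3w$ or $zw^3$. A short bookkeeping of coefficients ($12\equiv2$, $7128\equiv3$, $-21384\equiv1$, $-2916\equiv4$, the remaining two vanishing) yields $g\equiv 5z^3w+5zw^3\equiv0\pmod5$, so $t_{1+}(p)=5k\sqrt{-3}$ for some $k\in\Z$. To pass to $t_{1+}(p^{2\alpha})$ I feed this into the Hecke relation \eqref{eq:3} (valid since $p\equiv3\pmod4$): every summand there carries an even power $t_{1+}(p)^{2j}$ except the final $p^{12\alpha}$, and $t_{1+}(p)^2=-75k^2\equiv0\pmod5$ kills all terms with $j\ge1$, leaving $t_{1+}(p^{2\alpha})\equiv p^{12\alpha}\pmod5$. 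Since $p\equiv7\pmod{12}$ forces $p\neq5$, this is nonzero, so $5\nmid t_{1+}(p^{2\alpha})$.

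For part (ii) the analogous reduction modulo $7$, using $a^7\equiv a\pmod7$, gives $g\equiv 6z^5w+3z^3w^3+5zw^5=zw\,(6z^4+3z^2w^2+5w^4)\pmod7$. The key algebraic input is that over $\mathbb F_7$ the quadratic $6Z^2+3ZW+5W^2$ has discriminant $\equiv1$ and roots $Z=W,\,Z=2W$, whence $6z^4+3z^2w^2+5w^4\equiv 6(z^2-w^2)(z^2-2w^2)$ and $\tfrac{t_{1+}(p)}{\sqrt{-3}}\equiv 2zw(z^2-w^2)(z^2-2w^2)\pmod7$. I then argue by contraposition: if this product is nonzero mod $7$ then $z,w\not\equiv0$, so the ratio $t=(zw^{-1})^2\in\{1,2,4\}$ must avoid the roots $1$ and $2$, forcing $t=4$; but then $z^2\equiv4w^2$ and $p=z^2+3w^2\equiv7w^2\equiv0\pmod7$, impossible for a prime $p\neq7$. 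Hence $7\mid\tfrac{t_{1+}(p)}{\sqrt{-3}}$ for all $p\neq7$, while the excluded case $p=7$ (where $z=2,\,w=1$ gives exactly $t=4$) is the surviving one: a direct evaluation $g(2,1)=51480\equiv2\pmod7$ shows $7\nmid\tfrac{t_{1+}(7)}{\sqrt{-3}}=-102960$.

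The divisibility of $t_{1+}(p^{2\alpha})$ by $7$ is then read off from \eqref{eq:3} as in part (i). Writing $t_{1+}(p)=5k\sqrt{-3}$: for $p\neq7$ we have $7\mid 5k$, hence $7\mid k$ and $7\mid t_{1+}(p)^2$, so $t_{1+}(p^{2\alpha})\equiv p^{12\alpha}\not\equiv0\pmod7$; for $p=7$ we have $7\nmid k$, so $t_{1+}(7)^2=-75k^2\equiv2k^2\not\equiv0\pmod7$ and $t_{1+}(7^{2\alpha})\equiv t_{1+}(7)^{2\alpha}\not\equiv0\pmod7$. I expect the only real obstacle to be organizational rather than conceptual: getting the mod-$7$ factorization of the quartic correct and verifying that its unique surviving ratio $t=4$ coincides exactly with the condition $p\equiv0\pmod7$. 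This is the clean analogue of the \texttt{MAGMA}-assisted octic computation in Proposition~\ref{t2+ prime = 5 mod 12}; here no machine check is needed, because the relevant binary quadratic splits completely over $\mathbb F_7$.
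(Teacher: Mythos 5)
Your proposal is correct and follows essentially the same route as the paper's proof: expand $t_{1+}(p)$ via \eqref{eq:15} into an integer polynomial times $\sqrt{-3}$, reduce that polynomial mod $5$ and mod $7$ using Fermat's little theorem, handle $p=7$ by direct evaluation ($-102960$), and read off the statements for $t_{1+}(p^{2\alpha})$ from the even-power expansion \eqref{eq:3}. The only cosmetic difference is in the mod-$7$ step: the paper multiplies $h(x,y)$ by $x^2+3y^2$ to produce the identically vanishing factor $xy(x^6-y^6)$ and then cancels the unit $x^2+3y^2\not\equiv 0\pmod 7$, whereas you factor the Fermat-collapsed polynomial as $zw(z^2-w^2)(z^2-2w^2)$ and exclude the last square class $(z/w)^2\equiv 4$ because it would force $p\equiv 0\pmod 7$ --- the same use of the hypothesis $p\neq 7$, packaged differently.
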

\begin{proof}
Let $p \equiv 7 \pmod {12}$. From \eqref{eq:15}, we have 
\begin{equation} 
\label{expansion of t1+ prime p equiv 7 mod 12}
    t_{1+}(p)=-(x+y\sqrt{-3})^{12}+(x-y\sqrt{-3})^{12}= (-24 x^{11} y + 1320x^9y^3 - 14256x^7y^5 + 42768x^5y^7 - 35640x^3y^9 +5832xy^{11})\sqrt{-3},
\end{equation}
where $p=x^2+3y^2$. Let $h(x,y):= -24 x^{11} y + 1320x^9y^3 - 14256x^7y^5 + 42768x^5y^7 - 35640x^3y^9 +5832xy^{11}$.
\begin{enumerate}
\item [(i)]
Taking reduction modulo $5$, we get 
\begin{equation}
\label{eq 2400}
h(x,y)\equiv xy(x^4-y^4)(x^6 + 3y^6) \pmod 5.
\end{equation}
Since $a^4\equiv 1 \pmod 5$ for all $a\in \Z/5\Z\char`\\ \{0\}$, from \eqref{eq 2400} we obtain $h(x,y)\equiv 0 \pmod 5$.
Hence $t_{1+}(p)=5k\sqrt{-3}$ for some $k\in \Z$.
Since $5\nmid p$, from \eqref{eq:3} it follows that $5\nmid t_{1+}(p^{2\alpha})$ for $\alpha\in \mathbb{N}$.
\item [(ii)] 
Form \eqref{expansion of t1+ prime p equiv 7 mod 12} we have $t_{1+}(7)= -102960\sqrt{-3}$ (note that in this case we can choose $x=2, y=1$). Thus $7\nmid \frac{t_{1+}(7)}{\sqrt{-3}}$. Consequently, from \eqref{eq:3} we get $7\nmid t_{1+}(7^{2\alpha})$ for $\alpha\in \mathbb{N}$.
 
Now let $p=x^2+3y^2\ne 7$ be a prime such that $p\equiv 7 \pmod {12}$. Observe that $x^2+3y^2\not \equiv 0 \pmod 7$ and we have the relation
\begin{equation}
\label{eq 2500}
(x^2+3y^2)h(x,y)\equiv xy(x^6-y^6)(4x^6 + 2x^4y^2 + x^2y^4 + 4y^6) \pmod 7.
\end{equation}
Since $a^6\equiv 1 \pmod 7$ for all $a\in \Z/7\Z\char`\\ \{0\}$, we always have $xy(x^6-y^6)\equiv 0 \pmod 7$. Using the fact that $x^2+3y^2 \not \equiv 0 \pmod 7$, from \eqref{eq 2500} we obtain $h(x,y)\equiv 0 \pmod 7$. 
Hence we have $7|\frac{t_{1+}(p)}{\sqrt{-3}}$ for every prime $p\equiv 7 \pmod {12}$ and $p\ne 7$. Consequently, from \eqref{eq:3} we conclude that $7\nmid t_{1+}(p^{2\alpha})$ for $\alpha\in \mathbb{N}$.
\end{enumerate}
\end{proof}

\begin{prop}
\label{t2+, t1+ prime = 1 mod 12}
For any prime $p\equiv 1 \pmod {12}$, we have 
\begin{enumerate}
\item [(i)] $t_{2+}(p)\equiv 2 \ \mathrm{or} \ 3 \pmod 5$. Moreover, $5\nmid t_{2+}(p^\alpha)$ if and only if $\alpha \not\equiv 4 \pmod 5$.\label{prop 3.3 i}
\item [(ii)] $t_{2+}(p)\equiv 0,2,5 \pmod 7$. 
\item [(iii)] $t_{1+}(p) \equiv 2,5 \pmod 7$. Moreover, $7\nmid t_{1+}(p^\alpha)$ if and only if $\alpha \not\equiv 6 \pmod 7$.\label{prop 3.3 iii}
\item [(iv)] $t_{1+}(p)\equiv 2 \ \mathrm{or} \ 3 \pmod 5$. 

\end{enumerate}

\end{prop}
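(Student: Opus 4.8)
The plan is to reduce the explicit formulas for $t_{1+}(p)$ and $t_{2+}(p)$ from \eqref{eq:17} and \eqref{eq:10} modulo $5$ and modulo $7$, and then feed the resulting residues into the periodicity results of Theorems \ref{theorem 1 appendix} and \ref{theorem 2 appendix}. A useful preliminary observation is that each target set $\{2,3\}\pmod 5$, $\{2,5\}\pmod 7$, and $\{0,2,5\}\pmod 7$ is stable under negation; since \eqref{eq:17} and \eqref{eq:10} determine $t_{1+}(p)$ and $t_{2+}(p)$ only up to an overall sign, this sign is harmless for every value-set assertion, and for the ``moreover'' clauses the two possible signs will be handled by a theorem and its remark respectively.

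For the mod $5$ statements (i) and (iv) the key is the splitting type of $5$. In part (iv), $-3\equiv 2$ is a nonresidue mod $5$, so $5$ is inert in $K_1$ and the reduction of $\gamma:=z+w\sqrt{-3}$ lives in $\mathbb F_{25}$ with Frobenius acting by $\gamma\mapsto\gamma^5=\bar\gamma$; hence $\gamma^6\equiv \gamma\bar\gamma\equiv p$ and $\gamma^{12}+\bar\gamma^{12}\equiv 2p^2\pmod 5$ (the degenerate case $w\equiv 0$ also gives $\gamma^6\equiv z^2\equiv p$). Since $p\neq 5$ forces $p^2\equiv\pm 1$, I get $t_{1+}(p)\equiv\pm 2p^2\equiv 2$ or $3\pmod 5$. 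In part (i), $-1$ is a square mod $5$, so $5$ splits in $K_2$ and $(x\pm iy)^{12}$ reduces via $i\mapsto 2$ to $(x\pm 2y)^{12}$; by Fermat each factor is $0$ or $1$, and neither $x+2y$ nor $x-2y$ can vanish mod $5$ (either would force $5\mid p$), so $t_{2+}(p)\equiv\pm(1+1)\equiv 2$ or $3\pmod 5$. For the ``moreover'' part of (i), knowing $t_{2+}(p)\equiv 2$ or $3$ lets me invoke Theorem \ref{theorem 1 appendix} (respectively its Remark), making $\alpha\mapsto t_{2+}(p^\alpha)\bmod 5$ periodic (respectively anti-periodic) of period $5$; running the Hecke recursion \eqref{Hecke relation for 1 mod 4} with $p^{12}\equiv 1$ through one block $\alpha=0,\dots,4$ produces residues $1,2,3,4,0$ (resp.\ $1,3,3,1,0$), which vanish exactly at $\alpha\equiv 4\pmod 5$, and (anti-)periodicity preserves this zero pattern.

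The mod $7$ statements divide the same way. In part (iii), $-3\equiv 4$ is a square mod $7$, so $7$ splits in $K_1$ and $(z\pm w\sqrt{-3})^{12}$ reduces via $\sqrt{-3}\mapsto 2$ to $(z\pm 2w)^{12}$; neither base vanishes (else $7\mid p$), so by Fermat $t_{1+}(p)\equiv\pm 2\equiv 2$ or $5\pmod 7$, and the ``moreover'' clause follows from Theorem \ref{theorem 2 appendix} and Remark \ref{theorem 2 appendix remark}: the length-$7$ block of $t_{1+}(p^\alpha)\bmod 7$ computed from \eqref{Hecke relation for 1 mod 4} vanishes precisely at $\alpha\equiv 6\pmod 7$. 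Part (ii) is the one place where this clean collapse fails: $-1$ is a nonresidue mod $7$, so $7$ is inert in $K_2$, the reduction lies in $\mathbb F_{49}$, and one only gets $\alpha^{12}+\bar\alpha^{12}\equiv p(\alpha^4+\bar\alpha^4)$, whose value is not forced to $\pm 2$. Instead I would reduce the real-part polynomial $(x+iy)^{12}+(x-iy)^{12}\pmod 7$ to an explicit form in $x,y$ and verify, by the same finite check over $\mathbb F_7^2$ used in Proposition \ref{t2+ prime = 5 mod 12}(ii), that its values (hence $t_{2+}(p)$, up to sign) lie in $\{0,2,5\}$.

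The main obstacle is precisely part (ii): because $7$ is inert in $K_2$, there is no one-line Frobenius argument pinning down the residue, so one must carry out (or cite a computer verification of) the enumeration of the reduced degree-$12$ polynomial over $\mathbb F_7^2$, exactly as in the $p\equiv 5\pmod{12}$ case. The remaining bookkeeping---selecting the correct theorem-versus-remark dichotomy from the value of $t_{1+}(p)$ or $t_{2+}(p)$ and reading off the single vanishing index in each periodic block---is routine once the residues mod $5$ and $7$ are established.
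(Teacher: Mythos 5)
Your proposal is correct, and for parts (i), (iii) and (iv) it takes a genuinely different route from the paper, while coinciding with it on part (ii) and on both ``moreover'' clauses. The paper argues entirely on the expanded degree-$12$ polynomials: it reduces them modulo $5$ or $7$, splits into the cases $xy\equiv 0$ and $xy\not\equiv 0$, and uses facts such as $a^4\equiv 1\pmod 5$, $a^6\equiv 1\pmod 7$ and $x^2y^2\equiv 1\pmod 5$, together with (for (iii)) the auxiliary identity $(x^2+3y^2)\bigl(t_{1+}(p)-2x^6y^6\bigr)\equiv(x^6-y^6)(2x^8+2x^6y^2+x^4y^4+4x^2y^6+y^8)\pmod 7$, to pin the value to $\pm 2$. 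You instead read the residues off the splitting behaviour of $5$ and $7$ in $K_1$ and $K_2$: in the split cases ($5$ in $K_2$ for (i), $7$ in $K_1$ for (iii)) both conjugate factors reduce to nonzero elements of the prime field, so Fermat gives $\pm(1+1)=\pm 2$ at once, and in the inert case ($5$ in $K_1$ for (iv)) the Frobenius identity $\gamma^6=\gamma\cdot\gamma^5=N(\gamma)\equiv p$ in $\mathbb{F}_{25}$ gives $t_{1+}(p)\equiv \pm 2p^2\equiv \pm 2\pmod 5$; these computations check out, and your observation that each target set is stable under negation disposes of the sign ambiguity that the paper handles by ``considering the other value with negative sign.'' Your approach also explains structurally why (ii) is the exceptional case: with $7$ inert in $K_2$ one only gets $p(\alpha^4+\bar\alpha^4)$ in $\mathbb{F}_{49}$, so you fall back---exactly as the paper does, via a \texttt{MAGMA} loop as in Proposition~\ref{t2+ prime = 5 mod 12}(ii)---on the finite enumeration over $\mathbb{F}_7^2$. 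The treatment of the ``moreover'' clauses (Hecke recursion with $p^{12}\equiv 1$, blocks $1,2,3,4,0$ and $1,3,3,1,0$ modulo $5$ and the analogous blocks modulo $7$, then Theorem~\ref{theorem 1 appendix}, Theorem~\ref{theorem 2 appendix} and their remarks for periodicity or anti-periodicity, either of which preserves the zero pattern) is identical to the paper's. What the paper's hands-on polynomial reductions buy is uniformity and self-containedness---the same elementary technique also drives the neighbouring propositions for $p\equiv 5,7\pmod{12}$; what your argument buys is brevity and a conceptual explanation of why the residues collapse to $\pm 2$ exactly when the relevant small prime is not inert.
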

\begin{proof}
Let $p\equiv 1 \pmod {12}$.  
\begin{enumerate}
\item [(i)]
WLOG we assume that (cf. \eqref{eq:10})
\begin{equation}
t_{2+}(p)=(x+iy)^{12}+(x-iy)^{12}= 2\{x^{12}-66x^{10}y^2+495x^8y^4-924x^6y^6+495x^4y^8-66x^2y^{10}+y^{12}\},
\end{equation}
where $p=x^2+y^2$.
Observe that $(x,y)\not\equiv (0,0)\pmod 5$.
Taking reduction modulo $5$, we get 
\begin{equation}
\label{eq 2700}
t_{2+}(p)\equiv 2x^{12} + 3x^{10}y^2 + 2x^6y^6 + 3x^2y^{10} + 2y^{12} \pmod 5.
\end{equation}
First consider the case $xy\equiv 0 \pmod 5$, i.e either $x\equiv 0 \pmod 5$ or $y\equiv 0 \pmod 5$. Using the condition $(x,y)\not \equiv (0,0) \pmod 5$ and the fact that  $a^4\equiv 1 \pmod 5$ for all $a\in \Z/5\Z\char`\\ \{0\}$, we get $x^{12}+y^{12}\equiv 1 \pmod 5$. Now using the assumption $xy\equiv 0 \pmod 5$, from \eqref{eq 2700} we obtain $t_{2+}(p)\equiv 2 \pmod 5$.

Next assume that $xy\not \equiv 0 \pmod 5$.
A similar argument as in the proof of Proposition~\ref{t2+ prime = 5 mod 12}(i) shows that $x^2y^2 \equiv 1 \pmod 5$. 
Using the fact that  $a^4\equiv 1 \pmod 5$ for all $a\in \Z/5\Z\char`\\ \{0\}$, from \eqref{eq 2700} we get
\begin{align*}
t_{2+}(p)
&\equiv 2(1-x^2y^2-4x^2y^2-x^2y^2+1) \pmod 5\\
&\equiv 2 \pmod 5.
\end{align*}
On the other hand, considering the other value of $t_{2+}(p)$ from \eqref{eq:10} with negative sign and using a similar argument, we conclude that $t_{2+}(p)\equiv 2 \ \mathrm{or} \ 3 \pmod 5$.
This proves the first part of (i).

To prove the second part, first consider the case $t_{2+}(p)\equiv 2 \pmod 5$. 
Since $p^4 \equiv 1 \pmod 5$, from the Hecke relation \eqref{Hecke relation for 1 mod 4}, we get
\begin{equation}
\label{eq 2800}
t_{2+}(p^r)=t_{2+}(p)t_{2+}(p^{r-1})-p^{12}t_{2+}(p^{r-2})\equiv 2t_{2+}(p^{r-1})-t_{2+}(p^{r-2}) \pmod 5 \ \mathrm{for} \ r\geq 2.
\end{equation}
From \eqref{eq 2800}, we have (recall that $t_{2+}(1)=1$)
\begin{equation}
\label{2929}
t_{2+}(p^2)\equiv 3 \pmod 5, \ t_{2+}(p^3)\equiv 4 \pmod 5,  \ t_{2+}(p^4)\equiv 0 \pmod 5 \ \mathrm{and} \ t_{2+}(p^5)\equiv 1 \pmod 5.
\end{equation}
Furthermore, from Theorem \ref{theorem 1 appendix} we have
\begin{equation}
\label{2900}
t_{2+}(p^{5l+n})\equiv t_{2+}(p^{5(l-1)+n}) \pmod 5 \ \mathrm{for} \ l\in \mathbb{N} \ \mathrm{and} \ n\geq 0.
\end{equation}
 Now combining \eqref{2929} and \eqref{2900}, we conclude that $t_{2+}(p^\alpha)\equiv 0 \pmod 5$ if and only if $\alpha \equiv 4 \pmod 5$. Equivalently we have $5\nmid t_{2+}(p^\alpha)$ if and only if $\alpha \not\equiv 4 \pmod 5$.

A similar argument will work for the case $t_{2+}(p)\equiv 3 \pmod 5$. Note that in this case we have $t_{2+}(p^{5l+n})\equiv -t_{2+}(p^{5(l-1)+n}) \pmod 5$ for $l\in \mathbb{N}$. This completes the proof of (i).

\item [(ii)]
By explicit computation it can be verified that for $x,y\in \Z/7\Z$, the polynomial $$\pm((x+iy)^{12}+(x-iy)^{12})=\pm 2\{x^{12}-66x^{10}y^2+495x^8y^4-924x^6y^6+495x^4y^8-66x^2y^{10}+y^{12}\}$$ takes values only $0,2,5$ modulo $7$. This fact also can be verified via computer. For example, using the following code in \texttt{MAGMA} (\url{http://magma.maths.usyd.edu.au/calc/}), we can conclude that the polynomial $\pm((x+iy)^{12}+(x-iy)^{12})$ takes values only $0,2,5$ modulo $7$:
\begin{verbatim}
for x,y in GF(7) do
d:=2*(x^(12)-66*x^(10)*y^2+495*x^8*y^4-924*x^6*y^6+495*x^4*y^8-66*x^2*y^(10)+y^(12));
d, -d;
end for;
\end{verbatim}


\item [(iii)]
 WLOG we assume that (cf. \eqref{eq:17})
\begin{equation}
\label{3100}
t_{1+}(p)= (x+\sqrt{-3}y)^{12}+(x-\sqrt{-3}y)^{12}=2x^{12} - 396x^{10}y^2 + 8910x^8y^4 - 49896x^6y^6 + 80190x^4y^8 -32076x^2y^{10} + 1458y^{12},
\end{equation}
where $p=x^2+3y^2$. Observe that $x^2+3y^2\not \equiv 0 \pmod 7$ and $(x,y)\not\equiv (0,0)\pmod 7$. Taking reduction modulo $7$, we have
\begin{equation}
\label{3200}
t_{1+}(p)\equiv 2x^{12} + 3x^{10}y^2 + 6x^8y^4 + 5x^6y^6 + 5x^4y^8 + 5x^2y^{10} + 2y^{12} \pmod 7.
\end{equation}
First consider the case $xy\equiv 0 \pmod 7$, i.e either $x\equiv 0 \pmod 7$ or $y\equiv 0 \pmod 7$. Using the condition $(x,y)\not \equiv (0,0) \pmod 7$ and the fact that  $a^6\equiv 1 \pmod 7$ for all $a\in \Z/7\Z\char`\\ \{0\}$, we get $x^{12}+y^{12}\equiv 1 \pmod 7$. Now using the fact $xy\equiv 0 \pmod 7$, from \eqref{3200} we obtain $t_{1+}(p)\equiv 2 \pmod 7$.

Next assume that $xy\not \equiv 0 \pmod 7$. Using \eqref{3200}, it is easy to see that we have the following relation
\begin{equation}
\label{3300}
(x^2+3y^2)(t_{1+}(p)-2x^6y^6)\equiv (x^6-y^6)(2x^8 + 2x^6y^2 + x^4y^4 + 4x^2y^6 + y^8) \pmod 7.
\end{equation}
The assumption $xy\not\equiv 0 \pmod 7$ implies that $x^6\equiv y^6\equiv 1 \pmod 7$, i.e., $x^6-y^6\equiv 0 \pmod 7$. Since $x^2+3y^2 \not \equiv 0 \pmod 7$, from \eqref{3300} we conclude that 
$$t_{1+}(p)\equiv 2x^6y^6\equiv 2 \pmod 7.$$

On the other hand, considering the other value of $t_{1+}(p)$ from \eqref{eq:17} with negative sign and using a similar argument, we can conclude that $t_{1+}(p)\equiv 2 \ \mathrm{or} \ 5 \pmod 7$.
This proves the first part of (iii).

To prove the second part, first consider the case $t_{1+}(p)\equiv 2 \pmod 7$. 
Since $7\nmid p$, from the Hecke relation \eqref{Hecke relation for 1 mod 4}, we get
\begin{equation}
\label{3400}
t_{1+}(p^r)=t_{1+}(p)t_{1+}(p^{r-1})-p^{12}t_{1+}(p^{r-2})\equiv 2t_{1+}(p^{r-1})-t_{1+}(p^{r-2}) \pmod 7 \ \mathrm{for} \ r\geq 2.
\end{equation}
From \eqref{3400}, we have (recall that $t_{1+}(1)=1$)
\begin{equation}
\label{3500}
t_{1+}(p^i)\equiv i+1 \pmod 7 \ \mathrm{for} \ 2\leq i \leq 5, \ t_{1+}(p^6)\equiv 0 \pmod 7 \ \mathrm{and} \ t_{1+}(p^7) \equiv 1 \pmod 7.
\end{equation}
Furthermore, from Theorem \ref{theorem 2 appendix} we have
\begin{equation}
\label{3600}
t_{1+}(p^{7l+n})\equiv t_{1+}(p^{7(l-1)+n}) \pmod 7 \ \mathrm{for} \ l\in \mathbb{N}, \ \mathrm{and} \ n\geq 0.
\end{equation}
 Now combining \eqref{3500} and \eqref{3600}, we conclude that $t_{1+}(p^\alpha)\equiv 0 \pmod 7$ if and only if $\alpha \equiv 6 \pmod 7$. Equivalently we have $7\nmid t_{1+}(p^\alpha)$ if and only if $\alpha \not\equiv 6 \pmod 7$.

A similar argument will work for the case $t_{1+}(p)\equiv 5 \pmod 7$. Note that in this case we have $t_{1+}(p^{7l+n})\equiv -t_{1+}(p^{7(l-1)+n}) \pmod 7$ for $l\in \mathbb{N}$ and $n\geq 0$. This completes the proof of (iii).

%

\item [(iv)]
 First assume that (cf. \eqref{eq:17})
\begin{equation}
t_{1+}(p)= (x+\sqrt{-3}y)^{12}+(x-\sqrt{-3}y)^{12}=2x^{12} - 396x^{10}y^2 + 8910x^8y^4 - 49896x^6y^6 + 80190x^4y^8 -32076x^2y^{10} + 1458y^{12},
\end{equation}
where $p=x^2+3y^2$.
Observe that $(x,y)\not\equiv (0,0)\pmod 5$.
Taking reduction modulo $5$, we get 
\begin{equation}
\label{8400}
t_{1+}(p)\equiv 2x^{12} + 4x^{10}y^2 + 4x^6y^6 + 4x^2y^{10} +3y^{12} \pmod 5.
\end{equation}
If $xy \equiv 0 \pmod 5$, then using the facts that $(x,y)\not\equiv (0,0)\pmod 5$ and $a^4\equiv 1 \pmod 5$ for all $a\in \Z/5\Z\char`\\ \{0\}$, from \eqref{8400} we get $t_{1+}(p)\equiv 2,3 \pmod 5$.

Now assume that $xy\not \equiv 0 \pmod 5$. Using the fact that $a^2\equiv \pm 1 \pmod 5$ for all $a\in \Z/5\Z\char`\\ \{0\}$, from \eqref{8400}, it is easy to see that
$$t_{1+}\equiv 2+4.3(xy)^2+3\equiv \pm 2 \pmod 5.$$
Considering the other value of $t_{1+}(p)$ from \eqref{eq:17} with negative sign and using a similar argument, we can conclude that $t_{1+}(p)\equiv 2 \ \mathrm{or} \ 3 \pmod 5$. This completes the proof.
\end{enumerate}
\end{proof}
\subsection{Non-vanishing of $t_{1+}(p^\alpha)-t_{2+}(p^\alpha)$, when $\alpha>1$ is even}
\begin{lemma}\label{lemma1}
For a prime number $p\equiv1\pmod{12}$, we have $ord_2(t_{1+}(p)-t_{2+}(p))<\infty$ and so $t_{1+}(p)-t_{2+}(p)\neq0$.
\end{lemma}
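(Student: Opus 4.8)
The plan is to reduce the statement to the non-vanishing $t_{1+}(p)\ne t_{2+}(p)$, since for a nonzero integer $N$ one automatically has $\mathrm{ord}_2(N)<\infty$, while $\mathrm{ord}_2(0)=\infty$. So the real content is that the two integers $t_{1+}(p)$ and $t_{2+}(p)$ differ. I would start from the CM descriptions \eqref{eq:17} and \eqref{eq:10}: writing $p=z^2+3w^2=a^2+b^2$ and setting $\pi_1=z+w\sqrt{-3}\in\mathcal O_{K_1}$, $\pi_2=a+ib\in\mathcal O_{K_2}$ (both of norm $p$), these formulas give $t_{1+}(p)=\pm 2\,\mathrm{Re}(\pi_1^{12})$ and $t_{2+}(p)=\pm 2\,\mathrm{Re}(\pi_2^{12})$. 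I then write $\pi_1^{12}=A+C\sqrt{-3}$ and $\pi_2^{12}=B+Di$ with $A,B,C,D\in\mathbb Z$, so that $t_{1+}(p)=\pm 2A$ and $t_{2+}(p)=\pm 2B$. Note that with this setup I will not even need the fact, recorded in Section~2, that the signs on $t_{1+}$ and $t_{2+}$ coincide.

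Next I would argue by contradiction: suppose $t_{1+}(p)=t_{2+}(p)$. Dividing by $2$ and squaring removes the ambiguous signs and gives $A^2=B^2$. Now I take norms of the twelfth powers: since $\pi_1\overline{\pi_1}=\pi_2\overline{\pi_2}=p$, one gets the two identities $A^2+3C^2=p^{12}=B^2+D^2$. Combined with $A^2=B^2$ these force $3C^2=D^2$. Over $\mathbb Z$ this is possible only if $C=D=0$, for otherwise $\sqrt 3=|D/C|\in\mathbb Q$. In particular $C=0$, i.e. $\pi_1^{12}=A\in\mathbb Z$ is real, so $\pi_1^{12}=\overline{\pi_1^{12}}$.

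The decisive step is to rule out $\pi_1^{12}$ being real. Since $w\ne 0$ (otherwise $p=z^2$ would not be prime), we have $\pi_1\ne 0$, and $\pi_1^{12}=\overline{\pi_1^{12}}$ yields $(\pi_1/\overline{\pi_1})^{12}=1$. Thus $\pi_1/\overline{\pi_1}$ is a root of unity in $K_1=\mathbb Q(\sqrt{-3})$, hence a sixth root of unity $\zeta$, and therefore $\pi_1^2=\zeta\,\pi_1\overline{\pi_1}=\zeta p$. At the level of ideals this reads $(\pi_1)^2=(p)$, which contradicts the fact that $p\equiv 1\pmod 3$ splits (and is unramified) in $K_1$, where $(p)=\mathfrak p\,\overline{\mathfrak p}$ with $\mathfrak p\ne\overline{\mathfrak p}$ and $(\pi_1)$ equal to one of the two prime factors. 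Equivalently, and with no appeal to ramification, one may simply inspect $\pi_1^2=(z^2-3w^2)+2zw\sqrt{-3}$: the real choices $\zeta=\pm 1$ force $zw=0$, impossible, while $\zeta\in\{\pm\omega,\pm\omega^2\}$ would make the coordinates of $\zeta p$ half-integers with $p$ odd, also impossible. This contradiction shows $t_{1+}(p)\ne t_{2+}(p)$, hence $\mathrm{ord}_2(t_{1+}(p)-t_{2+}(p))<\infty$.

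I expect the genuine obstacle to be this last step, namely proving that $\pi_1^{12}$ cannot be real; everything preceding it is formal. It is worth flagging that a purely computational route — reducing $t_{1+}(p)-t_{2+}(p)$ modulo a fixed power of $2$ — will not succeed, because the $2$-adic valuation is \emph{not} uniformly bounded in $p$ (already for small primes such as $p=13$ one finds a nonzero but sizable valuation), so a structural argument through the norm form $A^2+3C^2=p^{12}$ together with the splitting of $p$ in $\mathbb Q(\sqrt{-3})$ is the appropriate mechanism.
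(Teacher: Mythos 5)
Your proof is correct, but it takes a genuinely different route from the paper's. The paper works directly with explicit expansions: writing $p=x^2+y^2=z^2+3w^2$, it expresses $t_{1+}(p)$ and $t_{2+}(p)$ as the \emph{same} polynomial $2p^6-144p^5u^2+1680p^4u^4-\cdots+4096u^{12}$ evaluated at $u=x$ and $u=z$ respectively, subtracts, and observes that the term $-144p^5(x^2-z^2)$ has strictly smaller $2$-adic valuation than every other term in the difference, whence $\ord_2(t_{1+}(p)-t_{2+}(p))=4+\ord_2(x^2-z^2)<\infty$. You instead argue by contradiction through the norm forms: $A^2=B^2$ together with $A^2+3C^2=p^{12}=B^2+D^2$ forces $3C^2=D^2$, hence $C=D=0$ by irrationality of $\sqrt{3}$, and you then rule out $\pi_1^{12}\in\ZZ$ via roots of unity in $\QQ(\sqrt{-3})$ (or the elementary coordinate check on $\pi_1^2=\zeta p$). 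Both arguments are sound, and both ultimately hinge on $\sqrt{3}\notin\QQ$ --- the paper needs this implicitly as well, since its valuation comparison is vacuous unless $x^2\neq z^2$, a point it does not spell out. What your route buys: it is sign-independent (you correctly note that the sign-coincidence fact from Section 2 is not needed), and it proves the stronger statement $t_{1+}(p)\neq\pm t_{2+}(p)$. What the paper's computation buys: quantitative $2$-adic information, namely the exact valuation and in particular that $t_{1+}(p)=2a$, $t_{2+}(p)=2b$ with $a,b$ odd, which is precisely what the proof of Lemma~\ref{lemma2} consumes; if one replaced the paper's proof of Lemma~\ref{lemma1} by yours, those expansions would still be needed there. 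Finally, your closing caution is accurate: the valuation $4+\ord_2(x^2-z^2)$ is unbounded as $p$ varies (it already equals $7$ for $p=13$), so no argument modulo a fixed power of $2$ could work, which is exactly why the paper compares valuations term by term instead.
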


\begin{proof}
Since $p\equiv1\pmod{12}$, therefore we can write
\begin{align*}
    p=x^2+y^2,\quad x,y\in\mathbb Z,\quad x\text{ is odd,}\\
    p=z^2+3w^2,\quad z,w\in\mathbb Z,\quad z\text{ is odd.}
\end{align*}
Thus,
\begin{align*}
    t_{1+}(p)=(+/-)\left((x+iy)^{12}+(x-iy)^{12}\right),\quad t_{2+}(p)=(+/-)\left((z+w\sqrt{-3})^{12}+(z-w\sqrt{-3})^{12}\right).
\end{align*}
We have already observed that the sign ($+$ or $-$) can be safely ignored as the same sign appears on both $t_{1+}(p)$ and $t_{2+}(p)$.

Now,
\begin{align}
    t_{1+}(p)&=2 p^6-144 p^5 x^2+1680 p^4 x^4-7168 p^3 x^6+13824 p^2 x^8-12288 p x^{10}+4096 x^{12},\label{eq:13}\\
    t_{2+}(p)&=2 p^6-144 p^5 z^2+1680 p^4 z^4-7168 p^3 z^6+13824 p^2 z^8-12288 p z^{10}+4096 z^{12}.\label{eq:14}
\end{align}
So,
\begin{align*}
    t_{1+}(p)-t_{2+}(p)=&-144 p^5 (x^2-z^2)+1680 p^4 (x^4-z^4)-7168 p^3 (x^6-z^6)+13824 p^2 (x^8-z^8)\\
    &-12288 p (x^{10}-z^{10})+4096 (x^{12}-z^{12})
\end{align*}
Clearly, on the RHS, the highest power of $2$ dividing the first term is strictly less than the highest power of $2$ dividing each of the other terms. Hence, $ord_2\left(t_{1+}(p)-t_{2+}(p)\right)<\infty$ and so $t_{1+}(p)-t_{2+}(p)\neq0$.

\end{proof}

\begin{lemma}\label{lemma2}
For a prime $p\equiv1\pmod{12}$ and $\alpha>1$, we have $ord_2\left(t_{1+}(p^\alpha)-t_{2+}(p^\alpha)\right)<\infty$ and so $t_{1+}(p^\alpha)-t_{2+}(p^\alpha)\neq0$.
\end{lemma}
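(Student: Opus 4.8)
The plan is to reduce the statement, exactly as in Lemma~\ref{lemma1}, to a finite $2$-adic valuation claim and then to isolate the $2$-adically dominant term. Write $a:=t_{1+}(p)$, $b:=t_{2+}(p)$ and, for $m\geq 0$, $u_m:=t_{1+}(p^m)$, $v_m:=t_{2+}(p^m)$, $d_m:=u_m-v_m$, so the goal is $\mathrm{ord}_2(d_\alpha)<\infty$. Since $p\equiv 1\pmod 4$, both $u_m$ and $v_m$ come from the \emph{same} Hecke recursion (\ref{Hecke relation for 1 mod 4}); by the explicit solutions (\ref{eq:1}) and (\ref{eq:2}) there is a single integer polynomial
\[
P_\alpha(X)=\sum_{i=0}^{\lfloor \alpha/2\rfloor}(-1)^i\binom{\alpha-i}{i}p^{12i}X^{\alpha-2i},
\]
with $u_\alpha=P_\alpha(a)$ and $v_\alpha=P_\alpha(b)$. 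Hence it suffices to prove $\mathrm{ord}_2\big(P_\alpha(a)-P_\alpha(b)\big)<\infty$.

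First I would factor out $a-b$. Using $a^m-b^m=(a-b)g_m$ with $g_m:=\sum_{l=0}^{m-1}a^lb^{m-1-l}$, one obtains
\[
d_\alpha=(a-b)\,e_\alpha,\qquad e_\alpha:=\sum_{i=0}^{\lfloor(\alpha-1)/2\rfloor}(-1)^i\binom{\alpha-i}{i}p^{12i}\,g_{\alpha-2i}.
\]
By Lemma~\ref{lemma1} we already know $\mathrm{ord}_2(a-b)=:s<\infty$, so the whole problem is reduced to showing $\mathrm{ord}_2(e_\alpha)<\infty$, i.e. that $e_\alpha\neq 0$.

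Next I would feed in the $2$-adic data coming from (\ref{eq:13}) and (\ref{eq:14}). These give $a\equiv b\equiv 2p^6\pmod{16}$, so $\mathrm{ord}_2(a)=\mathrm{ord}_2(b)=1$, while $a-2p^6$ and $b-2p^6$ have valuation $4$. Consequently every monomial $a^lb^{m-1-l}$ has valuation exactly $m-1$, whence $\mathrm{ord}_2(g_m)\geq m-1$ always, and for small $m$ the value is pinned down exactly: $g_1=1$ and $g_2=a+b$ has valuation exactly $2$. The $i$-th summand of $e_\alpha$ therefore has valuation $\mathrm{ord}_2\binom{\alpha-i}{i}+(\alpha-2i-1)+\mathrm{ord}_2(\alpha-2i)$, and the expected conclusion is that the \emph{last} summand — the one carrying the lowest power of $t(p)$, namely $g_1$ when $\alpha$ is odd and $g_2$ when $\alpha$ is even — strictly minimizes this valuation. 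If so, there is no cancellation, $\mathrm{ord}_2(e_\alpha)$ equals that single minimal valuation, and the proof is complete.

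The hard part is precisely this strict-minimality claim: one must show that for every $i$ with $\alpha-2i\geq 3$ the quantity $\mathrm{ord}_2\binom{\alpha-i}{i}+(\alpha-2i-1)+\mathrm{ord}_2(\alpha-2i)$ strictly exceeds the valuation of the last term (which equals $\mathrm{ord}_2\big((\alpha+1)/2\big)$ when $\alpha$ is odd). Note that the naive leading term $i=0$ is \emph{not} the minimizer, so the argument cannot stop at the top of the expansion. The binomial valuations $\mathrm{ord}_2\binom{\alpha-i}{i}$ must be controlled by Kummer's theorem (counting base-$2$ carries in the sum $i+(\alpha-2i)$): when $\mathrm{ord}_2(\alpha+1)$ is large the low binary digits of $\alpha$ are all $1$, which forces enough carries to make the intermediate binomials sufficiently $2$-divisible to beat the last term with room to spare. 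I expect this carry-counting, handled separately for odd and even $\alpha$, to be the only genuinely delicate step. For even $\alpha=2\beta$ (the case named in the subsection heading) one may alternatively run an induction via the doubling identity
\[
d_{2\beta}=d_\beta\,(u_\beta+v_\beta)-p^{12}\,d_{\beta-1}\,(u_{\beta-1}+v_{\beta-1}),
\]
trading the binomial estimate for a valuation bookkeeping on $u_m\pm v_m$.
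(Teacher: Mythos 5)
Your reduction is sound, and it is in fact the same opening move as the paper's own proof: express both $t_{1+}(p^\alpha)$ and $t_{2+}(p^\alpha)$ as one universal polynomial (coming from \eqref{eq:1}--\eqref{eq:2}) evaluated at $a=t_{1+}(p)$ and $b=t_{2+}(p)$, use \eqref{eq:13}--\eqref{eq:14} to get the $2$-adic normalization $\mathrm{ord}_2(a)=\mathrm{ord}_2(b)=1$, $a\equiv b\pmod{16}$, invoke Lemma~\ref{lemma1} for $\mathrm{ord}_2(a-b)<\infty$, and then try to show that the summand carrying the lowest power of $t(p)$ strictly minimizes the $2$-adic valuation, so no cancellation can occur. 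The problem is that you stop exactly where the lemma actually lives. The strict-minimality inequality --- for odd $\alpha=2\beta+1$, that $\mathrm{ord}_2\binom{\alpha-i}{i}+(\alpha-2i-1)>\mathrm{ord}_2(\beta+1)$ for all $0\le i\le\beta-1$, and its even analogue --- is the entire content of the statement, and you do not prove it: you only predict that Kummer-style carry counting, split into cases according to $\mathrm{ord}_2(\alpha+1)$, ``should'' yield it. Without that inequality, cancellation among the summands of $e_\alpha$ is not excluded and nothing has been shown. The doubling-identity route you offer for even $\alpha$ has the same status: the identity is correct, but the valuation bookkeeping that would turn it into a proof is never carried out.

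It is worth knowing that the step you deferred has a short, uniform proof (this is how the paper finishes), requiring no carry counting and no case split on $\mathrm{ord}_2(\alpha+1)$. Reindex by $\kappa=\beta+1-i$ and expand the binomial over the \emph{long} factorial: $\binom{\alpha-i}{i}=\binom{\beta+\kappa}{2\kappa-1}$ is the product of the $2\kappa-1$ consecutive integers $\beta-\kappa+2,\dots,\beta+\kappa$ divided by $(2\kappa-1)!$. This numerator always contains the factor $\beta+1=(\alpha+1)/2$ --- which is the structural reason the large power of $2$ you were worried about appears automatically --- while Legendre's formula gives $2\kappa-1-\mathrm{ord}_2\bigl((2\kappa-1)!\bigr)=1+s_2(\kappa-1)\ge 2$ for $\kappa\ge 2$, where $s_2$ denotes the binary digit sum. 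Hence $\mathrm{ord}_2\bigl(\binom{\beta+\kappa}{2\kappa-1}2^{2\kappa-1}\bigr)\ge\mathrm{ord}_2(\beta+1)+2$, which in your normalization reads $\mathrm{ord}_2\binom{\alpha-i}{i}+(\alpha-2i-1)\ge\mathrm{ord}_2(\beta+1)+1$; combined with $(a-b)\mid(a^{2\kappa-1}-b^{2\kappa-1})$ (with odd cofactor after halving $a$ and $b$), every intermediate summand strictly exceeds the last one in valuation, and the lemma follows. The even case runs identically, with the numerator of $\binom{\beta+\kappa}{2\kappa}$ containing $\beta(\beta+1)$ together with $\kappa-1$ further even factors, so that $\mathrm{ord}_2\bigl(\binom{\beta+\kappa}{2\kappa}2^{2\kappa}\bigr)\ge\mathrm{ord}_2\bigl(2\beta(\beta+1)\bigr)+\kappa-1$. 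In short: your framework is the right one, but the proposal defers precisely the estimate that constitutes the proof, and misjudges it as requiring delicate digit-by-digit analysis when a two-line factorial bound suffices.
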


\begin{proof}
We write $t_{1+}(p)=2a, t_{2+}(p)=2b$,  where $a, b$ are odd integers and $a\neq b$. First we assume, $\alpha=2\beta+1$ for some $\beta\geq1$. By (\ref{eq:2}), we have
\begin{align}\label{eq:11}
    t_{1+}(p^{2\beta+1})-t_{2+}(p^{2\beta+1})&=\left(t_{1+}(p)^{2\beta+1}-t_{2+}(p)^{2\beta+1}\right)-\binom{2\beta}{1}p^{12}\left(t_{1+}(p)^{2\beta-1}-t_{2+}(p)^{2\beta-1}\right)\\\nonumber
    &\quad+\dots+(-1)^\beta\binom{\beta+1}{1}p^{12\beta}\left(t_{1+}(p)-t_{2+}(p)\right)\\\nonumber
    &=\binom{\beta+(\beta+1)}{2(\beta+1)-1}2^{2(\beta+1)-1}(a^{2\beta+1}-b^{2\beta+1})-\binom{\beta+\beta}{2\beta-1}2^{2\beta-1}p^{12}(a^{2\beta-1}-b^{2\beta-1})\\\nonumber
    &\quad+\dots+(-1)^\beta(\beta+1)2p^{12\beta}(a-b).
\end{align}
Let $ord_2(2(\beta+1))=\gamma$. For any $\kappa=2,3,\dots,\beta+1$, we have
\begin{align*}
    \binom{\beta+\kappa}{2\kappa-1}2^{2\kappa-1}&=\frac{(\beta+\kappa)\cdots(\beta+1)\beta(\beta-1)\cdots(\beta-(\kappa-2))}{(2\kappa-1)!}\cdot2^{2\kappa-1}\\
    &=\frac{(\beta+\kappa)\cdots(\beta+1)\beta(\beta-1)\cdots(\beta-\kappa+2)}{u}\cdot2^{2\kappa-1-s},
\end{align*}
where $u$ is odd and $s=(\kappa-1)+\floor*{\frac{\kappa-1}{2}}+\dots+\floor*{\frac{\kappa-1}{2^n}}$, $n$ being the largest non-negative integer such that $2^n\leq \kappa-1$. Now,
\begin{align*}
    2\kappa-1-s&=2\kappa-1-\left((\kappa-1)+\floor*{\frac{\kappa-1}{2}}+\dots+\floor*{\frac{\kappa-1}{2^n}}\right)\\
    &\geq2\kappa-1-\left((\kappa-1)+{\frac{\kappa-1}{2}}+\dots+{\frac{\kappa-1}{2^n}}\right)\\
    &=1+\frac{\kappa-1}{2^n}\\
    &\geq2.
\end{align*}
So, we have
\begin{align*}
    \binom{\beta+\kappa}{2\kappa-1}2^{2\kappa-1}&=\frac{(\beta+\kappa)\cdots(2(\beta+1))\beta(\beta-1)\cdots(\beta-\kappa+2)}{u}\cdot2^{2\kappa-2-s}\\
    &=2^{\gamma+1}M.
\end{align*}
Now from (\ref{eq:11}), $ord_2\left(t_{1+}(p^\alpha)-t_{2+}(p^\alpha)\right)<\infty$ and so $t_{1+}(p^\alpha)-t_{2+}(p^\alpha)\neq0$.

Next, we assume, $\alpha=2\beta$ for some $\beta\geq1$. By (\ref{eq:1}), we have
\begin{align}\label{eq:12}
    t_{1+}(p^{2\beta})-t_{2+}(p^{2\beta})&=\left(t_{1+}(p)^{2\beta}-t_{2+}(p)^{2\beta}\right)-\binom{2\beta-1}{1}p^{12}\left(t_{1+}(p)^{2\beta-2}-t_{2+}(p)^{2\beta-2}\right)\\\nonumber
    &\quad+\dots+(-1)^{\beta-1}\binom{\beta+1}{2}p^{12\beta-12}\left(t_{1+}(p)^2-t_{2+}(p)^2\right)\\\nonumber
    &=\binom{\beta+\beta}{2\beta}2^{2\beta}(a^{2\beta}-b^{2\beta})-\binom{\beta+(\beta-1)}{2(\beta-1)}2^{2\beta-2}(a^{2\beta-2}-b^{2\beta-2})\\\nonumber
    &\quad+\dots+(-1)^{\beta-1}\binom{\beta+1}{2}2^2p^{12\beta-12}(a^2-b^2).
\end{align}
If $\beta=1$, then
\begin{align*}
    t_{1+}(p^2)-t_{2+}(p^2)=4(a^2-b^2)\neq0.
\end{align*}
Moreover, $ord_2\left(t_{1+}(p^2)-t_{2+}(p^2)\right)<\infty$. We assume, $\beta\geq2$. Note that $\binom{\beta+1}{2}2^2=2\beta(\beta+1)$. Let $ord_2(2\beta(\beta+1))=\delta$. For any $\kappa=2,3,\dots,\beta$, we have
\begin{align*}
    \binom{\beta+\kappa}{2\kappa}2^{2\kappa}&=\frac{(\beta+\kappa)\cdots(\beta+1)\beta(\beta-1)\cdots(\beta-\kappa+1)}{(2\kappa)!}\cdot2^{2\kappa}\\
    &=\frac{(\beta+\kappa)\cdots(\beta+1)\beta(\beta-1)\cdots(\beta-\kappa+1)}{v}\cdot2^{2\kappa-s'},
\end{align*}
where $v$ is odd and $s'=\kappa+\floor*{\frac{\kappa}{2}}+\dots+\floor*{\frac{\kappa}{2^n}}$, $n$ being the largest non-negative integer such that $2^n\leq \kappa$. Now,
\begin{align*}
    2\kappa-s'&=2\kappa-\left(\kappa+\floor*{\frac{\kappa}{2}}+\dots+\floor*{\frac{\kappa}{2^n}}\right)\\
    &\geq2\kappa-\left(\kappa+\frac{\kappa}{2}+\dots+\frac{\kappa}{2^n}\right)\\
    &=\frac{\kappa}{2^n}\\
    &\geq1.
\end{align*}
So, we have
\begin{align*}
    \binom{\beta+\kappa}{2\kappa}2^{2\kappa}&=\frac{(\beta+\kappa)\cdots(2(\beta+1)\beta)(\beta-1)\cdots(\beta-\kappa+1)}{v}\cdot2^{2\kappa-s'-1}\\
    &=2^{\delta+(\kappa-1)}N.
\end{align*}
But $\kappa-1\geq1$. Now from (\ref{eq:12}), $ord_2\left(t_{1+}(p^\alpha)-t_{2+}(p^\alpha)\right)<\infty$ and so $t_{1+}(p^\alpha)-t_{2+}(p^\alpha)\neq0$.
\end{proof}

\begin{lemma}\label{lemma3}
For a prime $p\equiv5\pmod{12}$ and $\alpha>1$ even, we have $ord_2\left(t_{1+}(p^\alpha)-t_{2+}(p^\alpha)\right)<\infty$ and so $t_{1+}(p^\alpha)-t_{2+}(p^\alpha)\neq0$.
\end{lemma}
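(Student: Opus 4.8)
The plan is to mimic the proof of Lemma~\ref{lemma2}, isolating a single $2$-adically dominant term; the only new feature is that now $t_{1+}(p)=0$ whereas $t_{2+}(p)$ is divisible by a large power of $2$. Write $\alpha=2\beta$ with $\beta\ge 1$. Since $p\equiv 5\pmod{12}$ forces $p\equiv 2\pmod 3$, the prime $p$ is inert in $K_1$, so $t_{1+}(p)=0$ and \eqref{eq:9} gives $t_{1+}(p^{2\beta})=(-1)^\beta p^{12\beta}$. As $p\equiv 1\pmod 4$, I apply the expansion \eqref{eq:1} to $t_{2+}$; its final ($t_{2+}(p)$-free) term is exactly $(-1)^\beta p^{12\beta}$. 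Subtracting, these top terms cancel, and after the reindexing $k=\beta-j$ I arrive at
\[
 t_{1+}(p^{2\beta})-t_{2+}(p^{2\beta})=(-1)^{\beta+1}\sum_{k=1}^{\beta}(-1)^{k}\binom{\beta+k}{2k}\,p^{12(\beta-k)}\,t_{2+}(p)^{2k},
\]
which is just \eqref{eq:12} specialised to $t_{1+}(p)=0$.

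Next I would determine $v:=\ord_2(t_{2+}(p))$. Writing $p=x^2+y^2$ with $x$ odd and $y$ even, the expansion \eqref{eq:16} reads $t_{2+}(p)=2(12x^{11}y-220x^9y^3+\cdots-12xy^{11})$. Since $x$ is odd, the leading term $24x^{11}y$ has $2$-adic valuation $3+\ord_2(y)$, strictly below that of every other term (each carrying a higher power of the even number $y$); hence $v=3+\ord_2(y)\ge 4$.

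The crux is to prove that the $k=1$ summand strictly minimises the $2$-adic valuation, the $k$-th term having valuation $\ord_2\binom{\beta+k}{2k}+2kv$. The numerator of $\binom{\beta+k}{2k}$ is a product of $2k$ consecutive integers containing both $\beta$ and $\beta+1$, while $\ord_2((2k)!)=2k-s_2(k)\le 2k-1$ (here $s_2(k)$ is the binary digit sum); this yields the uniform bound $\ord_2\binom{\beta+k}{2k}\ge \ord_2(\beta(\beta+1))-2k+1$. Together with $\ord_2\binom{\beta+1}{2}=\ord_2(\beta(\beta+1))-1$ it follows, for every $k\ge 2$, that
\[
 \Big(\ord_2\binom{\beta+k}{2k}+2kv\Big)-\Big(\ord_2\binom{\beta+1}{2}+2v\Big)\ge 2(k-1)(v-1)>0,
\]
since $v\ge 4$. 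Thus the $k=1$ term is the unique term of smallest valuation, so $\ord_2\!\big(t_{1+}(p^{2\beta})-t_{2+}(p^{2\beta})\big)=\ord_2\binom{\beta+1}{2}+2v<\infty$, and the difference is nonzero. The main obstacle is precisely this last step: when $\beta$ or $\beta+1$ is a high power of $2$ the coefficient $\binom{\beta+1}{2}$ is itself very $2$-divisible, so one must check that no higher-$k$ term can overtake it. The Legendre/consecutive-integers estimate handles this uniformly, the point being that the factor $t_{2+}(p)^{2k}$ contributes $2kv$, which outgrows any possible loss of at most $2(k-1)$ in the binomial valuations because $v\ge 4$.
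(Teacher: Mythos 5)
Your proof is correct, and its first half is exactly the paper's own: both replace $t_{1+}(p^{2\beta})$ by $(-1)^\beta p^{12\beta}$ via \eqref{eq:9}, cancel it against the constant term of the expansion \eqref{eq:1} of $t_{2+}(p^{2\beta})$, and reduce to showing that the $k=1$ term of $\sum_{k=1}^{\beta}\pm\binom{\beta+k}{2k}p^{12(\beta-k)}t_{2+}(p)^{2k}$ has strictly smallest $2$-adic valuation. Where you genuinely differ is in how this dominance is established. The paper writes $t_{2+}(p)=2c$ and defers to ``the same argument as in the proof of Lemma~\ref{lemma2}'', whose key step there is the assertion $\binom{\beta+\kappa}{2\kappa}2^{2\kappa}=2^{\delta+(\kappa-1)}N$; the computation displayed in Lemma~\ref{lemma2} justifies only one spare factor of $2$ beyond $\ord_2(2\beta(\beta+1))$ (the exponent $2\kappa-s'$ appearing there equals the binary digit sum of $\kappa$, which can be $1$), so a literal transplant of that argument needs additional $2$-divisibility coming from the factors $t_{2+}(p)^{2k}$. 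That is precisely what you supply: the explicit valuation $v=\ord_2\bigl(t_{2+}(p)\bigr)=3+\ord_2(y)\ge 4$, read off from \eqref{eq:16} and nowhere recorded in the paper, combined with the coarse but uniform bound $\ord_2\binom{\beta+k}{2k}\ge\ord_2(\beta(\beta+1))-2k+1$, gives the clean margin $2(k-1)(v-1)>0$ for every $k\ge 2$. So the two proofs share the same decomposition, but your dominance estimate is different and self-contained: the paper's version buys brevity by reusing Lemma~\ref{lemma2}, while yours buys rigor and independence, sidestepping the delicate binomial-coefficient claim entirely and closing the very gap that the appeal to Lemma~\ref{lemma2} leaves open.
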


\begin{proof}
Let $\alpha=2\beta$. In view of (\ref{eq:16}), we can write $t_{2+}(p)^2=2c$ for some integer $c$. By (\ref{eq:1}) and (\ref{eq:9}),  we have
\begin{align*}
    t_{1+}(p^{2\beta})-&t_{2+}(p^{2\beta})\\
    &=-\left(t_{2+}(p)^{2\beta}-\binom{2\beta-1}{1}p^{12}t_{2+}(p)^{2\beta-2}+\dots+(-1)^{\beta-1}\binom{\beta+1}{2}p^{12\beta-12}t_{2+}(p)^2\right)\\
    &=-\left(\binom{\beta+\beta}{2\beta}2^{2\beta}c^{2\beta}-\binom{\beta+(\beta-1)}{2(\beta-1)}2^{2\beta-2}c^{2\beta-2}+\dots+(-1)^{\beta-1}\binom{\beta+1}{2}2^2p^{12\beta-12}c^2\right).
\end{align*}
Now we can finish the proof by proceeding with the same argument as in the case of $\alpha$ being even in the proof of Lemma \ref{lemma2}.
\end{proof}

\begin{lemma}\label{lemma4}
For a prime $p\equiv7\pmod{12}$ and $\alpha>1$ even, we have $ord_2\left(t_{1+}(p^\alpha)-t_{2+}(p^\alpha)\right)<\infty$ and so $t_{1+}(p^\alpha)-t_{2+}(p^\alpha)\neq0$.
\end{lemma}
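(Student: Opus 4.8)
The plan is to mimic the structure of Lemma~\ref{lemma3}, exploiting that $p\equiv 7\pmod{12}$ forces $p$ to be inert in $K_2$ and split in $K_1$. First I would record the two inputs. Since $p\equiv 3\pmod 4$, equation~\eqref{eq:7} gives $t_{2+}(p^{2\beta})=p^{12\beta}$; and since $p\equiv 1\pmod 3$, equations \eqref{eq:15} and \eqref{expansion of t1+ prime p equiv 7 mod 12} let me write $t_{1+}(p)=h(x,y)\sqrt{-3}$ with $p=x^2+3y^2$ and $h(x,y)\in\mathbb Z$, so that $t_{1+}(p)^2=-3\,h(x,y)^2$ is a (negative) rational integer. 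In particular $t_{1+}(p^{2\beta})$ is a rational integer, being a polynomial in $t_{1+}(p)^2$ with the leading odd powers absent, and the whole difference $t_{1+}(p^{2\beta})-t_{2+}(p^{2\beta})$ lies in $\mathbb Z$; showing it is nonzero is the same as showing its $2$-adic valuation is finite.

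Next I would expand. Writing $\alpha=2\beta$ and applying the $p\equiv 3\pmod 4$ closed form \eqref{eq:3}, the term not involving $t_{1+}(p)$ is exactly $p^{12\beta}$, which cancels against $t_{2+}(p^{2\beta})=p^{12\beta}$. After substituting $t_{1+}(p)^2=-3h^2$ (abbreviating $h=h(x,y)$) into the surviving terms, this leaves
\begin{equation*}
t_{1+}(p^{2\beta})-t_{2+}(p^{2\beta})=\sum_{\kappa=1}^{\beta}\binom{\beta+\kappa}{2\kappa}\,p^{12(\beta-\kappa)}\,(-3)^{\kappa}h^{2\kappa}.
\end{equation*}
The case $\beta=1$ is immediate, since the sum is then just $-3h^2\neq 0$, so I would assume $\beta\geq 2$.

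The crux is a $2$-adic valuation count showing the $\kappa=1$ summand strictly dominates. The parity constraint $p\equiv 3\pmod 4$ forces $x$ even and $y$ odd, and a term-by-term inspection of $h(x,y)$ (the same computation underlying Proposition~\ref{t1+  prime = 7 mod 12}) shows the monomial $5832\,xy^{11}$ attains a strictly smallest $2$-adic valuation, so $\mu:=\mathrm{ord}_2(h)=3+\mathrm{ord}_2(x)\geq 4$; in particular $h\neq 0$. Since $p$, $-3$ and $h/2^{\mu}$ are all odd, the $\kappa$-th summand has $\mathrm{ord}_2=\mathrm{ord}_2\binom{\beta+\kappa}{2\kappa}+2\kappa\mu$. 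Invoking the binomial estimate established in the even case of Lemma~\ref{lemma2}, namely $\mathrm{ord}_2\binom{\beta+\kappa}{2\kappa}\geq \mathrm{ord}_2\binom{\beta+1}{2}-(\kappa-1)$ for $\kappa\geq 2$, the $\kappa$-th summand with $\kappa\geq 2$ has valuation at least $\mathrm{ord}_2\binom{\beta+1}{2}+2\mu+(\kappa-1)(2\mu-1)$, while the $\kappa=1$ summand has valuation exactly $\mathrm{ord}_2\binom{\beta+1}{2}+2\mu$. As $2\mu-1\geq 7>0$, every $\kappa\geq 2$ term has strictly larger $2$-adic valuation, so no cancellation can occur at the bottom level; hence $\mathrm{ord}_2$ of the sum equals $\mathrm{ord}_2\binom{\beta+1}{2}+2\mu<\infty$ and $t_{1+}(p^{2\beta})-t_{2+}(p^{2\beta})\neq 0$.

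The main obstacle is precisely this last comparison: one cannot simply bound $\mathrm{ord}_2\binom{\beta+\kappa}{2\kappa}$ from below by $0$, because $\mathrm{ord}_2\binom{\beta+1}{2}$ may be large (of size $\sim\log_2\beta$), so the dangerous competitors are the small-$\kappa$ terms, and ruling them out genuinely requires the \emph{relative} binomial estimate recycled from Lemma~\ref{lemma2}. Everything else — the cancellation of the $p^{12\beta}$ terms and the exact valuation $\mathrm{ord}_2(h)=3+\mathrm{ord}_2(x)$ — is routine bookkeeping that parallels Lemmas~\ref{lemma2} and~\ref{lemma3}.
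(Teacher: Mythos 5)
Your proof is correct and takes essentially the same route as the paper's: expand $t_{1+}(p^{2\beta})-t_{2+}(p^{2\beta})$ via \eqref{eq:3} and \eqref{eq:7}, cancel the two $p^{12\beta}$ terms, and show the $\kappa=1$ summand $2$-adically dominates all $\kappa\geq 2$ summands by recycling the binomial valuation estimate from the even case of Lemma~\ref{lemma2}. If anything you are more careful than the paper, whose proof writes $t_{1+}(p)^2=2d$ and silently assumes $d\neq 0$; your explicit computation $\mathrm{ord}_2\left(h(x,y)\right)=3+\mathrm{ord}_2(x)<\infty$ supplies exactly that missing nonvanishing.
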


\begin{proof}
Let $\alpha=2\beta$. In view of (\ref{eq:15}), we can write $t_{1+}(p)^2=2d$ for some integer $d$. By (\ref{eq:3}) and (\ref{eq:7}),  we have
\begin{align*}
    t_{1+}(p^{2\beta})-t_{2+}(p^{2\beta})&=t_{1+}(p)^{2\beta}+\binom{2\beta-1}{1}p^{12}t_{1+}(p)^{2\beta-2}+\dots+\binom{\beta+1}{2}p^{12\beta-12}t_{1+}(p)^2\\
    &=\binom{\beta+\beta}{2\beta}2^{2\beta}d^{2\beta}+\binom{\beta+(\beta-1)}{2(\beta-1)}2^{2\beta-2}d^{2\beta-2}+\dots+\binom{\beta+1}{2}2^2p^{12\beta-12}d^2.
\end{align*}
Now we can finish the proof by proceeding with the same argument as in the case of $\alpha$ being even in the proof of Lemma \ref{lemma2}.
\end{proof}

\section{\textbf{Proof of the Main Results}}
In this section we first investigate some partial cases of Statement\ref{negation statement}. Then as applications of those results we prove Theorem \ref{MT 1} and Theorem \ref{MT 2}.
\begin{theorem}
Let $12n+13=p^{\alpha}$, where $p$ is a prime such that $p\not \equiv 11 \pmod {12}$. Then $p_{26}(n)\ne 0$.
\end{theorem}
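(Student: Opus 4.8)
The plan is to evaluate the right-hand side of \eqref{eq:5} directly when $12n+13=p^\alpha$ and show it is nonzero, organising the argument by the residue of $p$ modulo $12$. The first step is to record two elementary facts about $m:=12n+13$: it is odd and satisfies $m\equiv 1\pmod 3$, hence $p\geq 5$; and $m\equiv 1\pmod{12}$. As $p$ is coprime to $12$ we have $p\equiv 1,5,7,11\pmod{12}$, and the hypothesis discards $p\equiv 11$. The decisive arithmetic input is the congruence $p^\alpha\equiv 1\pmod{12}$: since $5^2\equiv 7^2\equiv 1\pmod{12}$ while $5,7\not\equiv 1\pmod{12}$, this forces $\alpha$ to be \emph{even} when $p\equiv 5$ or $p\equiv 7\pmod{12}$, and imposes no condition when $p\equiv 1\pmod{12}$. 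Note also that $n\geq 0$ gives $\alpha\geq 1$, so in the two even cases in fact $\alpha\geq 2$.

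The second step is to collapse the four-term combination in \eqref{eq:5} to a single difference. In each surviving case one has $t_{1+}(p^\alpha)=t_{1-}(p^\alpha)$ and $t_{2+}(p^\alpha)=t_{2-}(p^\alpha)$: for $p\equiv 1\pmod{12}$ the two characters already agree at $p$ (because $w$ is even), so by the Hecke recursion they agree at every $p^\alpha$; for $p\equiv 5$ and $p\equiv 7\pmod{12}$ the equality holds for even $\alpha$, using \eqref{eq:9} and \eqref{eq:7} for the inert factor together with the parity statements recorded immediately after \eqref{eq:16} and \eqref{eq:15} for the split factor. Furthermore, as noted at the end of Section~2, the overall sign left ambiguous in the definitions of $t_{1+}$ and $t_{2+}$ is the \emph{same} on both, so it drops out of the difference. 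Hence \eqref{eq:5} reduces to
\begin{equation*}
p_{26}(n)=\frac{2}{32617728}\bigl(t_{1+}(p^\alpha)-t_{2+}(p^\alpha)\bigr).
\end{equation*}

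The final step is to invoke the lemmas already established, which assert precisely that this difference never vanishes in each relevant regime: Lemma~\ref{lemma1} for $p\equiv 1\pmod{12}$ and $\alpha=1$, Lemma~\ref{lemma2} for $p\equiv 1\pmod{12}$ and $\alpha>1$, Lemma~\ref{lemma3} for $p\equiv 5\pmod{12}$ and $\alpha>1$ even, and Lemma~\ref{lemma4} for $p\equiv 7\pmod{12}$ and $\alpha>1$ even. Since the parities forced in the first step fall exactly into these four regimes, we obtain $t_{1+}(p^\alpha)-t_{2+}(p^\alpha)\neq 0$, whence $p_{26}(n)\neq 0$. The substantive analytic content --- the $2$-adic valuation estimates proving non-vanishing --- is entirely contained in Lemmas~\ref{lemma1}--\ref{lemma4}; the main thing to get right here is the bookkeeping, namely confirming that the parity of $\alpha$ dictated by $p^\alpha\equiv 1\pmod{12}$ lines up with the hypothesis of the correct lemma, and that the coincidence of signs really turns the four-term expression into a difference (rather than a sum) of the quantities handled by those lemmas.
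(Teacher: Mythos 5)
Your proof is correct and follows essentially the same route as the paper: collapse the four-term formula \eqref{eq:5} to the single difference $t_{1+}(p^\alpha)-t_{2+}(p^\alpha)$ and invoke Lemmas~\ref{lemma1}--\ref{lemma4} according to the residue of $p$ modulo $12$ and the parity of $\alpha$. Your bookkeeping is in fact more careful than the paper's: you correctly deduce from $p^\alpha\equiv 1\pmod{12}$ that $\alpha$ must be \emph{even} (hence $\geq 2$) when $p\equiv 5,7\pmod{12}$, exactly as required by Lemmas~\ref{lemma3} and~\ref{lemma4}, whereas the paper's proof states ``$\alpha$ is odd'' in that case, an evident typo.
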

\begin{proof}
If $p\equiv 1 \pmod {12}$, then the result follows from Lemma~\ref{lemma1} and Lemma \ref{lemma2}. On the other hand when $p \equiv 5 \pmod {12}$ (resp., $p\equiv 7 \pmod {12}$), then the result follows from Lemma \ref{lemma3} (resp., Lemma \ref{lemma4}). Note that $\alpha$ is odd when $p\equiv 5$ or $7 \pmod {12}$.
\end{proof}

\begin{theorem}
\label{power of primes 5 mod 12 is odd}
Let for every prime number $p\equiv3\pmod4$, $\ord_p(12n+13)$ is even; and there exists a prime number $p'\equiv5\pmod{12}$ such that $\ord_{p'}(12n+13)$ is odd. Then $p_{26}(n)\neq 0$.
\end{theorem}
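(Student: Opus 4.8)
The plan is to feed the hypotheses into the CM-decomposition \eqref{eq:5} and the multiplicativity \eqref{eq:8}, and to reduce everything to the nonvanishing of a single quantity $t_{2+}(12n+13)$. Write $N:=12n+13$, so $N\equiv 1\pmod{12}$ and in particular $\gcd(N,12)=1$; factor $N=\prod_p p^{\alpha_p}$, where every prime divisor is congruent to $1,5,7$ or $11\pmod{12}$. The assumption that $\ord_p(N)$ is even for all $p\equiv 3\pmod 4$ says exactly that the divisors $\equiv 7$ or $11\pmod{12}$ occur to even exponent. The decisive input is the prime $p'\equiv 5\pmod{12}$ with $\alpha_{p'}$ odd: by \eqref{eq:9} we have $t_{1\pm}(p'^{\alpha_{p'}})=0$, and hence by \eqref{eq:8} $t_{1+}(N)=t_{1-}(N)=0$. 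Thus \eqref{eq:5} collapses to $32617728\,p_{26}(n)=-\bigl(t_{2+}(N)+t_{2-}(N)\bigr)$.

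Next I would pin down the sign tying $t_{2-}(N)$ to $t_{2+}(N)$. Let $s$ denote the number of primes $p\equiv 5\pmod{12}$ with $\alpha_p$ odd. Reducing the factorization of $N$ modulo $3$ gives $N\equiv(-1)^{\sum_{p\equiv 2(3)}\alpha_p}\pmod 3$; since $N\equiv 1\pmod 3$, this forces $\sum_{p\equiv 2(3)}\alpha_p$ to be even. The primes $\equiv 2\pmod 3$ are precisely those $\equiv 5$ or $11\pmod{12}$, and the latter carry even exponents, so $s$ is even. Now $t_{2-}(p^{\alpha_p})=t_{2+}(p^{\alpha_p})$ for every prime-power factor except when $p\equiv 5\pmod{12}$ with $\alpha_p$ odd, where $t_{2-}(p^{\alpha_p})=-t_{2+}(p^{\alpha_p})$; multiplying over all $p$ yields $t_{2-}(N)=(-1)^s t_{2+}(N)=t_{2+}(N)$. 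Hence $32617728\,p_{26}(n)=-2\,t_{2+}(N)$, and it remains to prove $t_{2+}(N)\neq 0$.

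For this I would bound the $2$-adic valuation and show $\ord_2\bigl(t_{2+}(N)\bigr)<\infty$, which by \eqref{eq:8} reduces to $\ord_2\bigl(t_{2+}(p^{\alpha_p})\bigr)<\infty$ for each prime divisor. For $p\equiv 7$ or $11\pmod{12}$ the exponent is even, so $t_{2+}(p^{\alpha_p})=p^{6\alpha_p}$ by \eqref{eq:6} and \eqref{eq:7} and $\ord_2=0$. For $p\equiv 1$ or $5\pmod{12}$ the prime splits in $\QQ(i)$, and the explicit expansions behind \eqref{eq:10} and \eqref{eq:16} exhibit a factor $2$, so $e:=\ord_2(t_{2+}(p))\geq 1$. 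If $\alpha_p=2\beta$ is even, the constant term $(-1)^\beta p^{12\beta}$ in \eqref{eq:1} is odd while every other term is divisible by $t_{2+}(p)^2$, giving $\ord_2(t_{2+}(p^{2\beta}))=0$. If $\alpha_p=2\beta+1$ is odd, I would run the same $2$-adic estimate as in Lemma~\ref{lemma2}: factor $t_{2+}(p)$ out of \eqref{eq:2} and use $\binom{\beta+\kappa}{2\kappa-1}2^{2\kappa-1}=2^{\gamma+1}M$ with $\gamma=\ord_2\bigl(2(\beta+1)\bigr)$ to see that the $\kappa=1$ term has strictly smallest valuation, whence $\ord_2(t_{2+}(p^{2\beta+1}))=e+\ord_2(\beta+1)<\infty$. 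Summing over $p$ gives $\ord_2(t_{2+}(N))<\infty$, so $t_{2+}(N)\neq 0$ and therefore $p_{26}(n)\neq 0$.

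The hard part is this last step. Lemmas~\ref{lemma1}--\ref{lemma4} control $\ord_2$ of the \emph{difference} $t_{1+}(p^\alpha)-t_{2+}(p^\alpha)$, whereas here I need the valuation of $t_{2+}(p^\alpha)$ \emph{alone} for $p\equiv 1$ or $5\pmod{12}$ and arbitrary exponent, including the cases $p\equiv 1\pmod{12}$ with $\alpha\equiv 4\pmod 5$ and $p\equiv 5\pmod{12}$ with $\alpha$ odd, which are exactly the cases invisible to the $5$-adic results of Propositions~\ref{t2+ prime = 5 mod 12} and \ref{t2+, t1+ prime = 1 mod 12}. The saving observation is that the binomial-coefficient estimate in the proof of Lemma~\ref{lemma2} only used $t_{2+}(p)\in 2\ZZ$, so it transfers once one checks that $t_{2+}(p)$ is even in both split cases.
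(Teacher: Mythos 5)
Your reduction is exactly the paper's: the prime $p'\equiv 5\pmod{12}$ with odd exponent kills $t_{1\pm}(12n+13)$ via \eqref{eq:9} and \eqref{eq:8}; the parity count of primes $\equiv 5 \pmod{12}$ with odd exponent (you argue mod $3$, the paper mod $12$ --- equivalent) gives $t_{2-}(12n+13)=t_{2+}(12n+13)$; and so $32617728\,p_{26}(n)=-2\,t_{2+}(12n+13)$. At this point the paper simply writes that the conclusion is ``clear'', whereas you attempt to actually prove $t_{2+}(12n+13)\neq 0$ by a $2$-adic valuation argument. Your even-exponent case is sound: since $t_{2+}(p)$ is even, \eqref{eq:1} gives $t_{2+}(p^{2\beta})\equiv (-1)^\beta p^{12\beta}\pmod 4$, which is odd, hence nonzero --- and this works even if $t_{2+}(p)$ were zero.

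The gap is in the odd-exponent case, which is precisely the case your hypothesis forces on you (at least one prime $\equiv 5\pmod{12}$ has odd exponent). Your conclusion $\ord_2\bigl(t_{2+}(p^{2\beta+1})\bigr)=e+\ord_2(\beta+1)<\infty$ presupposes $e=\ord_2\bigl(t_{2+}(p)\bigr)<\infty$, i.e.\ $t_{2+}(p)\neq 0$, but you only establish $e\geq 1$, and your closing sentence claims that evenness of $t_{2+}(p)$ is all that needs checking. That is false as stated: if $t_{2+}(p)=0$, then \eqref{eq:2} makes every term vanish, $t_{2+}(p^{2\beta+1})=0$, and the whole argument collapses --- evenness alone cannot rule this out. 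For $p\equiv 1\pmod{12}$ you can quote the paper: Lemma~\ref{lemma2} writes $t_{2+}(p)=2b$ with $b$ odd (equivalently, Proposition~\ref{t2+, t1+ prime = 1 mod 12}(i) gives $t_{2+}(p)\equiv 2,3\pmod 5$), so $e=1$. But for $p\equiv 5\pmod{12}$ --- the essential case --- nothing in the paper gives $t_{2+}(p)\neq 0$: Proposition~\ref{t2+ prime = 5 mod 12} only provides congruences modulo $5$ and $7$ that are compatible with vanishing, and Lemma~\ref{lemma3} concerns the difference $t_{1+}(p^\alpha)-t_{2+}(p^\alpha)$ for even $\alpha$. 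You must supply the missing step, e.g.\ from \eqref{eq:16}, $t_{2+}(p)=8xy\,u$ with $u=3x^{10}-55x^8y^2+198x^6y^4-198x^4y^6+55x^2y^8-3y^{10}$; since $p=x^2+y^2$ is an odd prime, $xy\neq 0$ and exactly one of $x,y$ is even, so $u$ is odd, whence $t_{2+}(p)\neq 0$ and $e=3+\ord_2(xy)<\infty$. With that one-line supplement your proof closes, and it then does strictly more than the paper's, which leaves the nonvanishing unjustified.
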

\begin{proof}
By (\ref{eq:8}) and (\ref{eq:9}), it is clear that $t_{1\pm}(12n+13)=0$. Now, by (\ref{eq:8}), (\ref{eq:6}), (\ref{eq:7}) and (\ref{eq:10}), we have
\begin{align*}
    t_{2+}(12n+13)+t_{2-}(12n+13)=\prod_{p\not\equiv5(12)}t_{2+}(p^{\alpha_p})\left(\prod_{p\equiv5(12)}t_{2+}(p^{\alpha_p})+\prod_{p\equiv5(12)}t_{2-}(p^{\alpha_p})\right).
\end{align*}
Also note that if $p\equiv5\pmod{12}$ then $t_{2+}(p^\alpha)=t_{2-}(p^\alpha)$ for any even $\alpha$ and $t_{2+}(p^\alpha)=-t_{2-}(p^\alpha)$ for any odd $\alpha$. Thus, we have
\begin{align*}
    \prod_{p\equiv5(12)}t_{2+}(p^{\alpha_p})+\prod_{p\equiv5(12)}t_{2-}(p^{\alpha_p})&=\prod_{\substack{p\equiv5(12)\\\alpha_p \text{ even}}}t_{2+}(p^{\alpha_p})\left(\prod_{\substack{p\equiv5(12)\\\alpha_p \text{ odd}}}t_{2+}(p^{\alpha_p})+\prod_{\substack{p\equiv5(12)\\\alpha_p \text{ odd}}}t_{2-}(p^{\alpha_p})\right)\\
    &=(1+(-1)^\mu)\prod_{\substack{p\equiv5(12)\\\alpha_p \text{ even}}}t_{2+}(p^{\alpha_p})\prod_{\substack{p\equiv5(12)\\\alpha_p \text{ odd}}}t_{2+}(p^{\alpha_p}),
\end{align*}
where $\mu$ is the number of odd $\alpha_p$ for $p\equiv5\pmod{12}$. Since $\alpha_p$ is even for all $p\equiv3\pmod4$, therefore we must have $\mu$ even because otherwise we will have $12n+13\equiv5\pmod{12}$, which is impossible. In this case, clearly $p_{26}(n)\neq0$.
\end{proof}

\begin{theorem}
\label{power of prime 5 mod 12 is even}
Let $\dis 12n+13=\prod_{p \equiv 1 \pmod {12}}p^{\alpha_p}\prod_{p \equiv 5 \pmod {12}}p^{2\beta_p}\prod_{p \equiv 7 \pmod {12}}p^{2\gamma_p}\prod_{p \equiv 11 \pmod {12}}p^{2\delta_p}$ such that $25|12n+13$ and $\alpha_p\not\equiv 4 \pmod 5$. Then $p_{26}(n)\ne 0$.
\end{theorem}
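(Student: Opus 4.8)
The plan is to collapse the four-term expression \eqref{eq:5} for $p_{26}(n)$ into a single difference, and then to separate the two surviving terms by a congruence modulo $5$. First I would show, working factor by factor through the multiplicativity \eqref{eq:8}, that $t_{1+}(12n+13)=t_{1-}(12n+13)$ and $t_{2+}(12n+13)=t_{2-}(12n+13)$. Indeed, under the stated factorization every prime $p\not\equiv 1\pmod{12}$ divides $12n+13$ to an even power, and for even exponents the formulas \eqref{eq:6}, \eqref{eq:7}, \eqref{eq:9} (together with the remarks that $t_{1+}(p^\alpha)=t_{1-}(p^\alpha)$ and $t_{2+}(p^\alpha)=t_{2-}(p^\alpha)$ for even $\alpha$) give identical values for the $+$ and $-$ characters; while for $p\equiv 1\pmod{12}$ the expressions \eqref{eq:17} and \eqref{eq:10} for $t_{1\pm}(p)$ and $t_{2\pm}(p)$ do not depend on the sign, so the common Hecke recursion \eqref{Hecke relation for 1 mod 4} (with the same $t(p)$ and $t(1)=1$) propagates the equality to every power $p^{\alpha_p}$. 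Substituting into \eqref{eq:5} then yields
\begin{equation*}
p_{26}(n)=\frac{2}{32617728}\big(t_{1+}(12n+13)-t_{2+}(12n+13)\big),
\end{equation*}
so it remains only to prove $t_{1+}(12n+13)\ne t_{2+}(12n+13)$.

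For this I would argue that the two integers lie in different residue classes modulo $5$. Since $25\mid 12n+13$, the prime $5\equiv 5\pmod{12}$ occurs with exponent $2\beta_5$, $\beta_5\ge 1$, and \eqref{eq:9} gives $t_{1+}(5^{2\beta_5})=(-1)^{\beta_5}5^{12\beta_5}\equiv 0\pmod 5$; as the remaining local factors in \eqref{eq:8} are integers, this forces $5\mid t_{1+}(12n+13)$. On the other hand I would check that every local factor of $t_{2+}(12n+13)$ is prime to $5$: for $p\equiv 1\pmod{12}$ this is exactly Proposition~\ref{t2+, t1+ prime = 1 mod 12}(i), whose hypothesis $\alpha_p\not\equiv 4\pmod 5$ holds by assumption; for $p\equiv 5\pmod{12}$ the even exponent $2\beta_p$ together with Proposition~\ref{t2+ prime = 5 mod 12}(i) gives $5\nmid t_{2+}(p^{2\beta_p})$; and for $p\equiv 7,11\pmod{12}$ the even exponent forces $t_{2+}(p^{2\gamma_p})=p^{12\gamma_p}$ and $t_{2+}(p^{2\delta_p})=p^{12\delta_p}$ by \eqref{eq:7} and \eqref{eq:6}, which are prime to $5$ because $p\ne 5$. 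Hence $5\nmid t_{2+}(12n+13)$, and therefore $t_{1+}(12n+13)\equiv 0\not\equiv t_{2+}(12n+13)\pmod 5$, so the difference is nonzero.

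Combining the two steps gives $p_{26}(n)\ne 0$. I expect the only delicate point to be the bookkeeping in the first step: one must confirm that the $\pm$ labels cancel factor by factor, in particular that the primes occurring to odd powers—necessarily those $\equiv 1\pmod{12}$—contribute equal values to $t_{1+}$ and $t_{1-}$ (and likewise for $t_{2\pm}$), so that no residual sign survives and the reduction to $t_{1+}-t_{2+}$ is clean. Once that reduction is in place the rest is a direct application of Propositions~\ref{t2+ prime = 5 mod 12} and \ref{t2+, t1+ prime = 1 mod 12}, the decisive idea being simply to test divisibility by $5$—precisely the arithmetic that the hypotheses $25\mid 12n+13$ and $\alpha_p\not\equiv 4\pmod 5$ are tailored to control.
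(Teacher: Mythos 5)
Your proof is correct and follows essentially the same route as the paper: reduce \eqref{eq:5} to the difference $t_{1+}(12n+13)-t_{2+}(12n+13)$ via the equality of the $\pm$ components, then show $5\mid t_{1+}(12n+13)$ (using $25\mid 12n+13$ and \eqref{eq:9}) while $5\nmid t_{2+}(12n+13)$ (using Propositions~\ref{t2+ prime = 5 mod 12} and \ref{t2+, t1+ prime = 1 mod 12} together with \eqref{eq:6} and \eqref{eq:7}). The only difference is that you spell out the factor-by-factor justification of $t_{1+}=t_{1-}$ and $t_{2+}=t_{2-}$, which the paper merely asserts.
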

\begin{proof}

By the assumption on $12n+13$ we have $t_{1+}(12n+13)=t_{1-}(12n+13)$ and $t_{2+}(12n+13)=t_{2-}(12n+13)$.
From Proposition \ref{t2+, t1+ prime = 1 mod 12}, Proposition \ref{t2+ prime = 5 mod 12}, \eqref{eq:7} and \eqref{eq:6} we get 
$$5\nmid \prod_{p \equiv 1 \pmod {12}}t_{2+}(p^{\alpha_p})\prod_{p \equiv 5 \pmod {12}}t_{2+}(p^{2\beta_p})\prod_{p \equiv 7 \pmod {12}}t_{2+}(p^{2\gamma_p})\prod_{p \equiv 11 \pmod {12}}t_{2+}(p^{2\delta_p}),$$
i.e $5\nmid t_{2+}(12n+13)$.
 On the other hand, from \eqref{eq:9} we have $5|t_{1+}(5^{2\beta}) $ for $\beta\in \mathbb{N}$. Since $25|12n+13$, we obtain $5|t_{1+}(12n+13)$. Hence we conclude that $5\nmid (t_{1+}(12n+13)-t_{2+}(12n+13))$. Equivalently, $5\nmid p_{26}(n)$. Thus $p_{26}(n)\ne 0$.
\end{proof}

\begin{theorem}
\label{power of prime 7 mod 12 is even}
Let $\dis 12n+13=\prod_{p \equiv 1 \pmod {12}}p^{\alpha_p}\prod_{p \equiv 5 \pmod {12}}p^{2\beta_p}\prod_{p \equiv 7 \pmod {12}}p^{2\gamma_p}\prod_{p \equiv 11 \pmod {12}}p^{2\delta_p}$ such that $49|12n+13$ and $\alpha_p\not\equiv 6 \pmod 7$. Then $p_{26}(n)\ne 0$.
\end{theorem}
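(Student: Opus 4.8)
The plan is to mirror the proof of Theorem~\ref{power of prime 5 mod 12 is even}, interchanging the roles of $t_{1+}$ and $t_{2+}$ and replacing the test prime $5$ by $7$. First I would record that, because every prime $p\equiv 5,7,11\pmod{12}$ divides $12n+13$ to an even power, while for $p\equiv 1\pmod{12}$ one always has $t_{1+}(p^{\alpha})=t_{1-}(p^{\alpha})$ and $t_{2+}(p^{\alpha})=t_{2-}(p^{\alpha})$ (by \eqref{eq:17} and \eqref{eq:10}), the sign relations following \eqref{eq:15} and \eqref{eq:16} together with \eqref{eq:6} give $t_{1+}(12n+13)=t_{1-}(12n+13)$ and $t_{2+}(12n+13)=t_{2-}(12n+13)$. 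Substituting into \eqref{eq:5} then collapses the four terms, so that
\begin{align*}
p_{26}(n)=\frac{1}{16308864}\bigl(t_{1+}(12n+13)-t_{2+}(12n+13)\bigr),
\end{align*}
and it suffices to show $t_{1+}(12n+13)-t_{2+}(12n+13)\neq 0$. I would obtain this by proving that $7$ divides exactly one of the two terms.

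Next I would check, factor by factor in the product \eqref{eq:8}, that $7\nmid t_{1+}(12n+13)$. For $p\equiv 1\pmod{12}$ this is precisely Proposition~\ref{t2+, t1+ prime = 1 mod 12}(iii) under the standing hypothesis $\alpha_p\not\equiv 6\pmod 7$; for $p\equiv 7\pmod{12}$ with even exponent it is Proposition~\ref{t1+ prime = 7 mod 12}(ii); and for $p\equiv 5\pmod{12}$ and $p\equiv 11\pmod{12}$ the relevant values are $\pm p^{12\beta_p}$ and $p^{12\delta_p}$ by \eqref{eq:9} and \eqref{eq:6}, which are prime to $7$ since any such $p$ is different from $7$. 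As a product of residues coprime to $7$ is again coprime to $7$, this yields $7\nmid t_{1+}(12n+13)$.

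For the complementary divisibility I would use that $49\mid 12n+13$ forces the prime $7$ itself (which is $\equiv 7\pmod{12}$ and inert in $K_2$) to occur with exponent $2\gamma_7\geq 2$; then \eqref{eq:7} gives $t_{2+}(7^{2\gamma_7})=7^{12\gamma_7}$, so this single Euler factor, and hence $t_{2+}(12n+13)$, is divisible by $7$. Combining the two steps gives $t_{1+}(12n+13)-t_{2+}(12n+13)\equiv t_{1+}(12n+13)\not\equiv 0\pmod 7$, so the difference is nonzero and therefore $p_{26}(n)\neq 0$.

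I do not expect a genuine obstacle, since this is the exact $7$-analogue of Theorem~\ref{power of prime 5 mod 12 is even} and every arithmetic input has already been isolated in Section~3. The only points needing care are bookkeeping the reversal of roles --- here it is $t_{2+}$ that the test prime divides, because $7$ is inert in $K_2$, whereas in the $5$-case it was $5$ (inert in $K_1$) that killed $t_{1+}$ --- and noting that $7$ divides neither $32617728=2^8\cdot 3^4\cdot 11^2\cdot 13$ nor $16308864$, so that non-divisibility of the numerator transfers to $p_{26}(n)$ itself.
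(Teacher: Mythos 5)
Your proposal is correct and follows essentially the same route as the paper's own proof: the equalities $t_{1+}(12n+13)=t_{1-}(12n+13)$, $t_{2+}(12n+13)=t_{2-}(12n+13)$, then $7\nmid t_{1+}(12n+13)$ via Proposition~\ref{t2+, t1+ prime = 1 mod 12}(iii), Proposition~\ref{t1+  prime = 7 mod 12}(ii), \eqref{eq:9} and \eqref{eq:6}, against $7\mid t_{2+}(12n+13)$ from \eqref{eq:7}. Your factor-by-factor bookkeeping and the remark that $7\nmid 32617728$ only make explicit what the paper leaves implicit in its appeal to the analogous Theorem~\ref{power of prime 5 mod 12 is even}.
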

\begin{proof}
The proof is similar to the proof of Theorem \ref{power of prime 5 mod 12 is even}.
By the assumption on $12n+13$ we have $t_{1+}(12n+13)=t_{1-}(12n+13)$ and $t_{2+}(12n+13)=t_{2-}(12n+13)$.
 From Proposition \ref{t2+, t1+ prime = 1 mod 12}, \eqref{eq:9}, Proposition \ref{t1+  prime = 7 mod 12}, \eqref{eq:6} we get 
$$7\nmid \prod_{p \equiv 1 \pmod {12}}t_{1+}(p^{\alpha_p})\prod_{p \equiv 5 \pmod {12}}t_{1+}(p^{2\beta_p})\prod_{p \equiv 7 \pmod {12}}t_{1+}(p^{2\gamma_p})\prod_{p \equiv 11 \pmod {12}}t_{1+}(p^{2\delta_p}),$$
i.e $7\nmid t_{1+}(12n+13)$. On the other hand, since $7|12n+13$, from \eqref{eq:7} we have $7|t_{2+}(12n+13)$. Hence we conclude that $7\nmid (t_{1+}(12n+13)-t_{2+}(12n+13))$. Consequently, $p_{26}(n)\ne 0$.
\end{proof}

\begin{theorem}
\label{power of primes 7 mod 12 is odd}
Let for every prime number $p\equiv2\pmod3$, $\ord_p(12n+13)$ is even; and there exists a prime number $p'\equiv7\pmod{12}$ such that $\ord_{p'}(12n+13)$ is odd. Then $p_{26}(n)\ne 0$.
\end{theorem}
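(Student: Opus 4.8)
The plan is to repeat the proof of Theorem~\ref{power of primes 5 mod 12 is odd} with the roles of the two CM fields interchanged, i.e.\ swapping $t_{1\pm}\leftrightarrow t_{2\pm}$ and the residue classes $5\leftrightarrow 7$ modulo $12$. First I would use the prime $p'\equiv 7\pmod{12}$ with $\ord_{p'}(12n+13)$ odd to annihilate the $K_2$-contribution: since $p'\equiv 3\pmod 4$ is inert in $K_2$, \eqref{eq:7} gives $t_{2\pm}({p'}^{\alpha_{p'}})=0$, and then multiplicativity \eqref{eq:8} forces $t_{2+}(12n+13)=t_{2-}(12n+13)=0$. By \eqref{eq:5} the claim thus reduces to showing $t_{1+}(12n+13)+t_{1-}(12n+13)\neq 0$.

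Next I would evaluate this sum through the sign relations for $t_{1\pm}$. For every prime $p\not\equiv 7\pmod{12}$ dividing $12n+13$ one has $t_{1+}(p^{\alpha_p})=t_{1-}(p^{\alpha_p})$ (for $p\equiv 5,11$ both sides equal the expressions in \eqref{eq:9} and \eqref{eq:6}; for $p\equiv 1$ this is the coincidence of signs recorded just after \eqref{eq:17}), whereas for $p\equiv 7\pmod{12}$, \eqref{eq:15} together with \eqref{eq:3}--\eqref{eq:4} gives $t_{1+}(p^{\alpha_p})=(-1)^{\alpha_p}t_{1-}(p^{\alpha_p})$. Pulling out the common factors exactly as in Theorem~\ref{power of primes 5 mod 12 is odd} yields
\begin{align*}
t_{1+}(12n+13)+t_{1-}(12n+13)=\bigl(1+(-1)^{\nu}\bigr)\prod_p t_{1+}(p^{\alpha_p}),
\end{align*}
where $\nu$ denotes the number of primes $p\equiv 7\pmod{12}$ with $\alpha_p$ odd. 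To see that $\nu$ is even I reduce modulo $12$: as $5^2\equiv 7^2\equiv 11^2\equiv 1\pmod{12}$, every $p\equiv 2\pmod 3$ (that is $p\equiv 5,11\pmod{12}$) has even exponent by hypothesis and each $p\equiv 1\pmod{12}$ contributes $1$, so $1\equiv 12n+13\equiv\prod_p p^{\alpha_p}\equiv 7^{\nu}\pmod{12}$, forcing $\nu$ even and $1+(-1)^{\nu}=2$.

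The remaining and main point is the non-vanishing of $\prod_p t_{1+}(p^{\alpha_p})$. The factors with $p\equiv 5,11\pmod{12}$ are, up to sign, strictly positive powers of $p$ by \eqref{eq:9} and \eqref{eq:6}. For the split primes $p\equiv 1$ and $p\equiv 7\pmod{12}$ (including the odd-exponent factor $t_{1+}({p'}^{\alpha_{p'}})$) I expect the genuine work to lie: here $t_{1+}(p^{\alpha})=\pm(\pi^{\alpha+1}-\overline\pi^{\alpha+1})/(\pi-\overline\pi)$ with $\pi=c_{1+}(\mathfrak p)$ for a prime $\mathfrak p\mid p$ of $\mathcal O_{K_1}$, which can vanish only if $\mathfrak p^{\alpha+1}=\overline{\mathfrak p}^{\alpha+1}$, impossible by unique factorization since $\mathfrak p\neq\overline{\mathfrak p}$; alternatively one argues, as in Lemmas~\ref{lemma1}--\ref{lemma4}, that the defining polynomial of $t_{1+}(p)$ has a unique term of minimal $2$-adic valuation and that the Hecke recursion propagates a finite valuation to all $t_{1+}(p^{\alpha_p})$. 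Granting this, every factor is nonzero, and hence $p_{26}(n)=\tfrac{2}{32617728}\prod_p t_{1+}(p^{\alpha_p})\neq 0$.
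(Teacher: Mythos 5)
Your proposal is correct and follows essentially the same route as the paper: annihilate $t_{2\pm}(12n+13)$ via the inert prime $p'\equiv 7\pmod{12}$ occurring to an odd exponent, factor $t_{1+}(12n+13)+t_{1-}(12n+13)$ as $\bigl(1+(-1)^{\nu}\bigr)\prod_p t_{1+}(p^{\alpha_p})$ using the sign relations, and force $\nu$ even by reducing $12n+13$ modulo $12$. The only difference is at the final step, where the paper simply asserts ``clearly $p_{26}(n)\neq 0$'': you supply the justification that the split-prime factors $t_{1+}(p^{\alpha_p})$ for $p\equiv 1,7\pmod{12}$ are nonzero (vanishing would force $c_{1+}(\mathfrak p)^{\alpha+1}=c_{1+}(\overline{\mathfrak p})^{\alpha+1}$, hence $\mathfrak p=\overline{\mathfrak p}$ by unique factorization of ideals), a standard but genuine point the paper glosses over.
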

\begin{proof}
By (\ref{eq:8}) and (\ref{eq:7}), it is clear that $t_{2\pm}(12n+13)=0$. Now, by (\ref{eq:8}), (\ref{eq:6}), (\ref{eq:9}) and (\ref{eq:17}), we have
\begin{align*}
    t_{1+}(12n+13)+t_{1-}(12n+13)=\prod_{p\not\equiv7(12)}t_{1+}(p^{\alpha_p})\left(\prod_{p\equiv7(12)}t_{1+}(p^{\alpha_p})+\prod_{p\equiv7(12)}t_{1-}(p^{\alpha_p})\right).
\end{align*}
Also note that if $p\equiv7\pmod{12}$ then $t_{1+}(p^\alpha)=t_{1-}(p^\alpha)$ for any even $\alpha$ and $t_{1+}(p^\alpha)=-t_{1-}(p^\alpha)$ for any odd $\alpha$. Thus, we have
\begin{align*}
    \prod_{p\equiv7(12)}t_{1+}(p^{\alpha_p})+\prod_{p\equiv7(12)}t_{1-}(p^{\alpha_p})&=\prod_{\substack{p\equiv7(12)\\\alpha_p \text{ even}}}t_{1+}(p^{\alpha_p})\left(\prod_{\substack{p\equiv7(12)\\\alpha_p \text{ odd}}}t_{1+}(p^{\alpha_p})+\prod_{\substack{p\equiv7(12)\\\alpha_p \text{ odd}}}t_{1-}(p^{\alpha_p})\right)\\
    &=(1+(-1)^\nu)\prod_{\substack{p\equiv7(12)\\\alpha_p \text{ even}}}t_{1+}(p^{\alpha_p})\prod_{\substack{p\equiv7(12)\\\alpha_p \text{ odd}}}t_{1+}(p^{\alpha_p}),
\end{align*}
where $\nu$ is the number of odd $\alpha_p$ for $p\equiv7\pmod{12}$. Since $\alpha_p$ is even for all $p\equiv2\pmod3$, therefore we must have $\nu$ even because otherwise we will have $12n+13\equiv7\pmod{12}$, which is impossible. In this case, clearly $p_{26}(n)\neq0$.
\end{proof}

Finally, as applications of the previous results we now prove Theorem \ref{MT 1} and Theorem \ref{MT 2}. 

\subsection{Proof of Theorem \ref{MT 1}}
In this section we always assume that $n\in \mathbb{N}$ such that $\ord_p(12n+1)\not\equiv 4 \pmod 5$ for every prime $p\equiv 1 \pmod {12}$. 
Furthermore, if n is such that there exists a prime number $p\equiv 3 \pmod 4$ such that $\ord_p(12n+1)$ is odd (equivalently $\ord_p(12(25n+1)+13)$ is odd) and there exists a prime number $p^\prime \equiv 2 \pmod 3$ such that $\ord_{p^\prime}(12n+1)$ is odd (equivalently $\ord_{p^\prime}(12(25n+1)+13)$ is odd), then from Theorem \ref{Serre theorem} it follows that $p_{26}(25n+1)=0$. Hence it suffices to prove the converse part.
Note that the converse part of Theorem \ref{MT 1} is equivalent to prove the following theorem:

\begin{theorem}
If $n$ satisfies one of the following conditions, then $p_{26}(25n+1)\ne 0$.
\begin{enumerate}
\item For every prime $p\equiv 3 \pmod 4$, $\ord_p(12n+1)$ is even.
\item For every prime $p\equiv 2 \pmod 3$, $\ord_p(12n+1)$ is even.
\end{enumerate}
\end{theorem}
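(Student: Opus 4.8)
The plan is to exploit the identity $12(25n+1)+13 = 25(12n+1)$, so that the coefficient in question sits at $N := 25(12n+1) = 5^2 M$, where $M := 12n+1 \equiv 1 \pmod{12}$. First I would record how the hypotheses on $M$ propagate to $N$: since $5 \equiv 1 \pmod 4$, $5 \equiv 2 \pmod 3$ and $5 \equiv 5 \pmod{12}$, one has $\ord_p(N) = \ord_p(M)$ for every prime $p \ne 5$, while $\ord_5(N) = 2 + \ord_5(M)$. In particular $25 \mid N$ automatically, and for every prime $p \equiv 1 \pmod{12}$ the standing assumption $\ord_p(12n+1) \not\equiv 4 \pmod 5$ reads exactly as $\ord_p(N) \not\equiv 4 \pmod 5$. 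The whole argument is then a reduction of each of the two conditions to the nonvanishing results already proved in this section, via a dichotomy on the parity of the orders of a suitable residue class modulo $12$.

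For condition (1), all primes $p \equiv 3 \pmod 4$ have even order in $M$, hence (as $5 \equiv 1 \pmod 4$) even order in $N$. I would split on the primes $\equiv 5 \pmod{12}$. If some prime $\equiv 5 \pmod{12}$ divides $N$ to an odd power (this includes the case $p=5$, which occurs precisely when $\ord_5(M)$ is odd), then the hypotheses of Theorem \ref{power of primes 5 mod 12 is odd} are met and give $p_{26}(25n+1) \ne 0$. Otherwise every prime $\equiv 5 \pmod{12}$ divides $N$ to an even power; together with condition (1) this forces every prime $\not\equiv 1 \pmod{12}$ to occur in $N$ to an even power, and since $25 \mid N$ and $\ord_p(N) \not\equiv 4 \pmod 5$ for all $p \equiv 1 \pmod{12}$, Theorem \ref{power of prime 5 mod 12 is even} applies.

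Condition (2) is handled by the mirror-image dichotomy, now on the primes $\equiv 7 \pmod{12}$. Here all primes $\equiv 2 \pmod 3$ have even order in $M$; since $5 \equiv 2 \pmod 3$ this makes $\ord_5(M)$ even, so $\ord_5(N) = 2 + \ord_5(M)$ is again even, and every prime $\equiv 2 \pmod 3$ has even order in $N$. If some prime $\equiv 7 \pmod{12}$ divides $N$ to an odd power, Theorem \ref{power of primes 7 mod 12 is odd} yields the claim. Otherwise every prime $\equiv 7 \pmod{12}$ occurs to an even power, so once more every prime $\not\equiv 1 \pmod{12}$ occurs to an even power in $N$, and I would again invoke Theorem \ref{power of prime 5 mod 12 is even} (using $25 \mid N$ and the standing assumption).

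The point to be most careful about is the bookkeeping introduced by the extra factor $5^2$: one must verify that $\ord_5(N)$ has the correct parity in each branch, and that in the ``all even'' branch of \emph{both} conditions the hypotheses of Theorem \ref{power of prime 5 mod 12 is even}—every prime $\not\equiv 1 \pmod{12}$ to an even power, $25 \mid N$, and the residue condition modulo $5$—genuinely hold. The main conceptual observation, and the reason condition (2) is \emph{not} routed through the prime-$7$ statement Theorem \ref{power of prime 7 mod 12 is even}, is that $N = 25(12n+1)$ always supplies the divisibility $25 \mid N$ but need not supply $49 \mid N$; hence both ``all even'' branches must be funnelled into the prime-$5$ result.
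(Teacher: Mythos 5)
Your proposal is correct and takes essentially the same route as the paper: the identical dichotomy on whether some prime $\equiv 5 \pmod{12}$ (for condition (1)) or $\equiv 7 \pmod{12}$ (for condition (2)) divides $12(25n+1)+13 = 25(12n+1)$ to an odd power, invoking Theorem~\ref{power of primes 5 mod 12 is odd} or Theorem~\ref{power of primes 7 mod 12 is odd} in the odd branch and Theorem~\ref{power of prime 5 mod 12 is even} in both even branches. Your explicit $\ord_5$ bookkeeping and the observation that both ``all even'' branches must be routed through the prime-$5$ theorem (since $25 \mid 25(12n+1)$ but $49$ need not divide it) simply make explicit what the paper's terser proof leaves implicit.
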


\begin{proof}
\begin{enumerate}
\item Let $\ord_p(12n+1)$ be even for every prime $p\equiv 3\pmod 4$. Hence $\ord_p(12(25n+1)+13)$ is even for every prime $p\equiv 3 \pmod 4$. If there exists a prime number $p'\equiv 5\pmod{12}$ such that $\ord_{p'}(12(25n+1)+13)$ is odd, then the result follows from Theorem \ref{power of primes 5 mod 12 is odd}. On the other hand, if $\ord_{p'}(12(25n+1)+13)$ is even for all prime $p^\prime \equiv 5 \pmod {12}$, then the result follows from Theorem \ref{power of prime 5 mod 12 is even}.
\item Let $\ord_p(12n+1)$ be even for every prime $p\equiv 2 \pmod 3$. Hence $\ord_p(12(25n+1)+13)$ is even for every prime $p\equiv 2 \pmod 3$. If there exists a prime $p^\prime \equiv 7 \pmod {12}$ such that $\ord(12(25n+1)+13)$ is odd, then the result follows from Theorem \ref{power of primes 7 mod 12 is odd}. On the other hand, if $\ord_{p'}(12(25n+1)+13)$ is even for all prime $p^\prime \equiv 7 \pmod {12}$, then the result follows from Theorem \ref{power of prime 5 mod 12 is even}.
\end{enumerate}
\end{proof}

\subsection{Proof of Theorem \ref{MT 2}}
Now assume that $n\in \mathbb{N}$ such that $\ord_p(12n+1)\not\equiv 6 \pmod 7$ for every prime $p\equiv 1 \pmod {12}$. The proof of Theorem \ref{MT 2} is analogous to the proof of Theorem \ref{MT 1}. For the sake of completeness we are giving the complete proof. 

If $n$ satisfies the assumptions of Theorem \ref{MT 2}, then from Theorem \ref{Serre theorem} we conclude that $p_{26}(49n+3)=0$.
Note that the converse part of Theorem \ref{MT 2} is equivalent to prove the following theorem:

\begin{theorem}
If $n$ satisfies one of the following conditions, then $p_{26}(49n+3)\ne 0$.
\begin{enumerate}
\item For every prime $p\equiv 3 \pmod 4$, $\ord_p(12n+1)$ is even.
\item For every prime $p\equiv 2 \pmod 3$, $\ord_p(12n+1)$ is even.
\end{enumerate}
\end{theorem}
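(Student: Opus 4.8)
The plan is to mimic the proof of the converse part of Theorem \ref{MT 1}, replacing the role of $25$ by $49$. The starting point is the arithmetic identity
\begin{align*}
12(49n+3)+13 = 588n+49 = 49(12n+1) = 7^2(12n+1).
\end{align*}
Since $7 \equiv 3 \pmod 4$ and $7 \equiv 1 \pmod 3$, multiplying $12n+1$ by $7^2$ affects the $p$-adic valuations as follows: for every prime $p \neq 7$ one has $\ord_p(12(49n+3)+13) = \ord_p(12n+1)$, while $\ord_7(12(49n+3)+13) = \ord_7(12n+1) + 2$. In particular the parity of $\ord_p$ is preserved for every prime $p \equiv 3 \pmod 4$ (including $p = 7$), the parity is unchanged for every $p \equiv 2 \pmod 3$ (as no such prime equals $7$), and the standing hypothesis $\ord_p(12n+1) \not\equiv 6 \pmod 7$ for $p \equiv 1 \pmod{12}$ transfers verbatim to $12(49n+3)+13$, since $7 \not\equiv 1 \pmod{12}$.

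First I would treat case (1), where $\ord_p(12n+1)$ is even for every prime $p \equiv 3 \pmod 4$; by the valuation bookkeeping above the same holds for $12(49n+3)+13$. I then split on the primes $p' \equiv 5 \pmod{12}$: if some such $p'$ has $\ord_{p'}(12(49n+3)+13)$ odd, the hypotheses of Theorem \ref{power of primes 5 mod 12 is odd} are met and yield $p_{26}(49n+3) \neq 0$. Otherwise every $p' \equiv 5 \pmod{12}$ occurs to an even power; combined with the even powers already forced on all $p \equiv 7,11 \pmod{12}$ (these being exactly the primes $\equiv 3 \pmod 4$ coprime to $12$), this puts $12(49n+3)+13$ into the exact factored shape required by Theorem \ref{power of prime 7 mod 12 is even}, and together with $49 \mid 12(49n+3)+13$ and $\alpha_p \not\equiv 6 \pmod 7$ that theorem gives $p_{26}(49n+3) \neq 0$.

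Case (2) is entirely parallel, exchanging the roles of the residues. Assuming $\ord_p(12n+1)$ even for every $p \equiv 2 \pmod 3$, the same holds for $12(49n+3)+13$. If some $p' \equiv 7 \pmod{12}$ has odd valuation, Theorem \ref{power of primes 7 mod 12 is odd} applies directly. If instead all $p' \equiv 7 \pmod{12}$ occur to even powers, then all primes $\equiv 5,7,11 \pmod{12}$ occur to even powers, so Theorem \ref{power of prime 7 mod 12 is even} again applies. In both subcases $p_{26}(49n+3) \neq 0$.

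The only genuinely delicate point, and the step I expect to require the most care, is checking that in the two ``all even'' subcases the integer $12(49n+3)+13$ really satisfies all three hypotheses of Theorem \ref{power of prime 7 mod 12 is even} simultaneously: the precise factorization with even exponents on every prime $\not\equiv 1 \pmod{12}$, the divisibility $49 \mid 12(49n+3)+13$, and the restriction $\alpha_p \not\equiv 6 \pmod 7$ on the exponents of primes $\equiv 1 \pmod{12}$. The latter two are immediate from the identity and the standing assumption, so the crux is the parity bookkeeping at the prime $7$, which the observation $\ord_7(12(49n+3)+13) = \ord_7(12n+1)+2$ resolves.
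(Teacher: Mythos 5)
Your proof is correct and follows essentially the same route as the paper's own proof: the same reduction via $12(49n+3)+13=49(12n+1)$, the same case split on whether some prime $p'\equiv 5\pmod{12}$ (resp.\ $p'\equiv 7\pmod{12}$) has odd valuation, and the same appeals to Theorem \ref{power of primes 5 mod 12 is odd}, Theorem \ref{power of primes 7 mod 12 is odd} and Theorem \ref{power of prime 7 mod 12 is even}. The only difference is presentational: you make the valuation bookkeeping at the prime $7$ (and the transfer of the standing hypothesis $\alpha_p\not\equiv 6\pmod 7$) explicit, which the paper leaves implicit.
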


\begin{proof}
\begin{enumerate}
\item Let $\ord_p(12n+1)$ be even for every prime $p\equiv 3\pmod 4$. Hence $\ord_p(12(49n+3)+13)$ is even for every prime $p\equiv 3 \pmod 4$. If there exists a prime number $p'\equiv 5\pmod{12}$ such that $\ord_{p'}(12(49n+3)+13)$ is odd, then the result follows from Theorem \ref{power of primes 5 mod 12 is odd}. On the other hand, if $\ord_{p'}(12(49n+3)+13)$ is even for all prime $p^\prime \equiv 5 \pmod {12}$, then the result follows from Theorem \ref{power of prime 7 mod 12 is even}.
\item Let $\ord_p(12n+1)$ be even for every prime $p\equiv 2 \pmod 3$. Hence $\ord_p(12(49n+3)+13)$ is even for every prime $p\equiv 2 \pmod 3$. If there exists a prime $p^\prime \equiv 7 \pmod {12}$ such that $\ord(12(49n+3)+13)$ is odd, then the result follows from Theorem \ref{power of primes 7 mod 12 is odd}. On the other hand, if $\ord_{p'}(12(49n+3)+13)$ is even for all prime $p^\prime \equiv 7 \pmod {12}$, then the result follows from Theorem \ref{power of prime 7 mod 12 is even}.
\end{enumerate}
\end{proof}

\section*{Acknowledgments}
The authors are grateful to Prof. J.P. Serre for informing about this open problem. The authors would
like thank Abinash Sarma for helpful comments and suggestions. This paper was written when the Tarun Dalal was a
visiting at IISER Thiruvananthapuram and he is grateful to the School of Mathematics
for the excellent support and hospitality. Srilakshmi Krishnamoorthy's research was partially supported by the SERB
grant CRG/2023/009035

\end{document}